\numberwithin{equation}{section}
\newtheorem{theorem}{Theorem}[section]
\newtheorem{lemma}[theorem]{Lemma}
\newtheorem{proposition}[theorem]{Proposition}
\theoremstyle{definition}
\newtheorem{definition}[theorem]{Definition}
\theoremstyle{remark}
\newtheorem{remark}[theorem]{Remark}
\numberwithin{equation}{section}
\newcommand{\rien}[1]{}
\newcommand{\Q}{\ensuremath{\mathbb{Q}}}
\newcommand{\C}{\ensuremath{\mathbb{C}}}
\newcommand{\D}{\ensuremath{\mathbb{D}}}
\newcommand{\R}{\ensuremath{\mathbb{R}}}
\renewcommand{\epsilon}{\varepsilon}
\renewcommand{\phi}{\varphi}
\renewcommand{\emptyset}{\varnothing}
\begin{document}

\newpage

\renewcommand{\baselinestretch}{1.07}

\title[Parabolic cylinders]{Automorphisms of $\C^2$ with parabolic cylinders}

\author[L. Boc Thaler]{Luka Boc Thaler$^{\dag}$}

\author[F. Bracci]{Filippo Bracci$^{\dag\dag}$}
\author[H. Peters]{Han Peters}

\address{L. Boc Thaler: Faculty of Education, University of Ljubljana, SI--1000 Ljubljana, Slovenia.} \email{luka.boc@pef.uni-lj.si}
\address{F. Bracci: Dipartimento di Matematica, Universit\`a di Roma ``Tor Vergata", Via della Ricerca
Scientifica 1, 00133, Roma, Italia.} \email{fbracci@mat.uniroma2.it}

\address{ H. Peters: Korteweg de Vries Institute for Mathematics\\
University of Amsterdam\\
the Netherlands} \email{hanpeters77@gmail.com}

\thanks{$^{\dag}$  Supported by the SIR grant ``NEWHOLITE - New methods in holomorphic iteration'' no. RBSI14CFME and by the research program P1-0291 from ARRS, Republic of Slovenia}
\thanks{$^{\dag\dag}\,$Partially supported by the MIUR Excellence Department Project awarded to the
Department of Mathematics, University of Rome Tor Vergata, CUP E83C18000100006 and PRIN {\sl Real and Complex Manifolds: Topology, Geometry and holomorphic dynamics} n.2017JZ2SW5}

\subjclass[2000]{32H02, 32H50, 37F50, 37F99 }
\keywords{holomorphic dynamics; local dynamics; automorphisms; Fatou components}

\vfuzz=2pt

\begin{abstract} A {\sl parabolic cylinder} is an invariant, non-recurrent Fatou component $\Omega$ of an automorphism $F$ of $\C^2$ satisfying: (1) The closure of the $\omega$-limit set of $F$ on $\Omega$ contains an isolated fixed point, (2) there exists a univalent map $\Phi$ from $\Omega$ into $\mathbb C^2$ conjugating $F$ to the translation $(z,w) \mapsto (z+1, w)$, and (3) every limit map of
 $\{F^{\circ n}\}$ on $\Omega$ has one-dimensional image. In this paper we  prove the existence of parabolic cylinders for an explicit class of maps, and show that examples in this class can be constructed as compositions of shears and overshears.
 \end{abstract}

\maketitle \vfuzz=2pt

\section{Introduction}

\subsection{Main result}

The description of Fatou components plays a central role in our understanding of holomorphic dynamical systems. In one complex variable the different kinds of Fatou components have been precisely characterized. In the last decade there has been significant progress in higher dimensions as well, but many fundamental questions remain unanswered. A primary objective is to describe invariant Fatou components in terms of the limit behavior of orbits, their complex geometry, and if possible, to give a normal form for the action of the map on the Fatou component.

The purpose of this paper is to shed more light on a specific kind of non-recurrent Fatou component, which we will call a {\sl parabolic cylinder}. Recall that an invariant Fatou component is called non-recurrent if every orbit on the component eventually departs from every compact subset of the component.

\begin{definition}\label{Def:parabolic-cyl}
Let $F$ be an automorphism of $\C^2$. An invariant non-recurrent Fatou component $\Omega$ is called a {\sl parabolic cylinder} if
\begin{enumerate}
\item the closure of the $\omega$-limit set of $F$ on $\Omega$ contains an isolated fixed point,
\item there exists a univalent map $\Phi:\Omega\to \C^2$, conjugating $F$ to the translation $$(z,w) \mapsto (z+1, w),$$
\item all limit maps of $F$ on $\Omega$ have dimension one.
\end{enumerate}
\end{definition}

Our main result is the following:

\begin{theorem}\label{main}
Let $F$ be an automorphism of $\C^2$ of the form
\begin{equation}\label{form-intro}
F(z,w)=\left(z+f(w)z^2+O(z^3, z^3w), e^{2\pi i \theta} w +g(w)z+O(z^2, z^2w)\right),
\end{equation}
where  $\theta\in\R\backslash\Q$ is Diophantine, $f(0)\neq 0$, and  $g(w)=O(w^2)$. Then there exists a parabolic cylinder $\Omega$ for $F$ that is biholomorphically equivalent to $\C^2$. Moreover, every limit map of $F$ on $\Omega$ has image $ \{0\}\times \C$.
\end{theorem}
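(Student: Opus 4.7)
The plan is to build a Fatou coordinate for $F$ near the origin that is adapted to the mixed parabolic/Siegel structure, and then identify its domain as the desired cylinder. I would proceed in four steps.

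\textbf{Step 1 (normal form).} First normalize $F$ by a sequence of coordinate changes $w\mapsto w+h(w)z^{j}$ in order to kill the $z$-dependent terms in the second component. Each step reduces to a cohomological equation of the form $h(e^{2\pi i\theta}w)-e^{2\pi i\theta}h(w)=\psi(w)$, whose Taylor coefficients involve small divisors $e^{2\pi i(a-1)\theta}-1$; the Diophantine condition gives the lower bound $|e^{2\pi i(a-1)\theta}-1|\geq C/(a-1)^{\tau}$, so $h$ converges on a fixed polydisc. The assumption $g(w)=O(w^{2})$ is exactly what eliminates the resonance at $a=1$. After finitely many such changes I would assume
$$F(z,w)=\bigl(z+f(w)z^{2}+O(z^{3}),\;e^{2\pi i\theta}w+O(z^{N})\bigr),\qquad N\geq 2.$$

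\textbf{Step 2 (basin and orbit control).} The restriction to $\{w=0\}$ is a one-dimensional parabolic germ with petal $P$ in the direction $-1/f(0)$. Set $\Omega_{0}:=P\times\mathbb D_{r}$ for $r>0$ small. A routine induction using the normal form shows $F(\Omega_{0})\subset\Omega_{0}$, that $z_{n}\sim-1/(nf(0))$, and — crucially — that $|w_{n+1}-e^{2\pi i\theta}w_{n}|=O(|z_{n}|^{N})=O(1/n^{N})$ is summable.

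\textbf{Step 3 (Fatou coordinate and extension).} On $\Omega_{0}$ define $\Phi=(u,v)$ by
$$u(z,w)=\lim_{n\to\infty}\Bigl(-\frac{1}{f(0)z_{n}}-n\Bigr)+\text{(log correction)},\qquad v(z,w)=\lim_{n\to\infty}e^{-2\pi in\theta}w_{n}.$$
Step~2 gives locally uniform convergence, and $u\circ F=u+1$, $v\circ F=v$ are immediate. After verifying injectivity on $\Omega_{0}$, extend $\Phi$ to $\Omega:=\bigcup_{n\geq 0}F^{-n}(\Omega_{0})$ by $\Phi(p):=\Phi(F^{n}(p))-(n,0)$. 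This gives a univalent holomorphic map $\Phi\colon\Omega\to\C^{2}$ conjugating $F$ to the translation $(z,w)\mapsto(z+1,w)$.

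\textbf{Step 4 (verification and main obstacle).} The set $\Omega$ is open, $F$-invariant, and non-recurrent (every orbit has $z_{n}\to 0$), and the origin lies in the closure of its $\omega$-limit; this yields (1). The map $\Phi$ gives (2). For (3), if $F^{n_{k}}\to h$ locally uniformly then $\Phi\circ h=\lim(\Phi+(n_{k},0))$, forcing the $z$-coordinate of $h$ to be $0$ while the $w$-coordinate ranges over $\{e^{2\pi i\alpha}v:v\in\mathrm{im}\,v\}$ as $n_{k}\theta\to\alpha\pmod 1$, so $h$ has image $\{0\}\times\C$. The biholomorphism $\Omega\cong\C^{2}$ follows by standard Fatou--Bieberbach exhaustion techniques for parabolic basins. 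The main obstacle throughout is the existence of $v$: without the normalization of Step~1 the increments $w_{n+1}-e^{2\pi i\theta}w_{n}$ are only $O(1/n)$ and fail to sum, so no invariant coordinate converges. The Diophantine hypothesis is exactly what is needed to solve the countably many small-divisor cohomological equations in Step~1 and make the second Fatou coordinate exist.
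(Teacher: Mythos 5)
Your overall strategy (normalize, control orbits on a petal times a disc, build Fatou coordinates as limits of renormalized iterates) matches the paper's, and your Step 1 treatment of the \emph{second} coordinate is a legitimate variant: the shear $w\mapsto w+h(w)z$ with $h(\lambda w)-\lambda h(w)=g(w)$, solvable with an entire $h$ thanks to the Diophantine condition and $g=O(w^2)$, plays the role of the paper's map $\Phi$ and does make $|w_{n+1}-\lambda w_n|=O(1/n^2)$ summable, so your coordinate $v=\lim\lambda^{-n}w_n$ exists.

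The genuine gap is in the \emph{first} Fatou coordinate. You never normalize the term $f(w)z^2$, and with $f$ non-constant the limit $u=\lim_n\bigl(-1/(f(0)z_n)-n-A\log n\bigr)$ does not exist. Indeed, in the coordinate $u=-1/z$ the map reads $u\mapsto u+f(w)+O(1/u)$, so
\[
u_n-n=u_0+\sum_{k=0}^{n-1}\bigl(f(w_k)-1\bigr)+O(\log n),\qquad
\sum_{k=0}^{n-1}\bigl(f(\lambda^k w_0)-1\bigr)=\sum_{\ell\ge1}d_\ell w_0^\ell\,\frac{\lambda^{n\ell}-1}{\lambda^\ell-1},
\]
which is bounded (by the Diophantine condition) but oscillates and has no limit as $n\to\infty$. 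This is precisely why the paper inserts the additional conjugation $\Psi(u,w)=\bigl(u+\sum_\ell d_\ell w^\ell/(\lambda^\ell-1),\,w\bigr)$ — a second small-divisor cohomological equation, this time in the first coordinate — which converts $u+f(w)$ into $u+1+O(1/u)$ before the Écalle-type limit is taken; a further correction $\tau$ is then needed to identify the constant $A$ in the $\log$ term. Your outline attributes the need for the Diophantine condition entirely to the second coordinate, but it is equally indispensable here, and without this step your $u$ fails to converge, so no conjugation to $(z,w)\mapsto(z+1,w)$ is obtained. Two smaller gaps: your construction on a fixed $P\times\mathbb D_r$ only shows limit maps with image containing a small disc, whereas the conclusion ``image $=\{0\}\times\C$'' and the biholomorphism $\Omega\cong\C^2$ require an exhaustion by petals $P_\delta\times\mathbb D_\delta$ with $\delta\to\infty$ (the petal shrinking as $\delta$ grows); and the identity $\Omega=\bigcup_nF^{-n}(\Omega_0)$ — i.e., that your basin is a full Fatou component — needs an argument (the paper uses the analogous repelling petal of $F^{-1}$).
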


The first example of an automorphism of $\mathbb C^2$ with a non-recurrent Fatou component on which all limit maps have rank $1$ was given in \cite{JL}. There an explicit map of the form
\begin{equation}\label{lilovexample}
G(z,w)=(z+z^2+O(z^3,z^4w,z^6w^2), w-\frac{z^2w}{2}+O(z^3,z^3w,z^3w^2)),
\end{equation}
was constructed, and it was shown that $G$ exhibits a Fatou component on which the orbits converge to the fixed plane $\{0\} \times \mathbb C$.  By composing $G$ with a rotation $(z,w) \mapsto (z,e^{2\pi i\theta} w)$, one obtains a non-recurrent Fatou component where the orbits converge to the rotating $w$-axis. Our main result implies that one can also obtain the normal form $(z,w) \mapsto (z+1, w)$.

We emphasize that the term $g(z)w$ that is allowed to appear in the second coordinate of the maps $F$ vanishes for the map $G$ given in equation \eqref{lilovexample}. The term of the form $g(z)w$ makes it significantly harder to prove convergence of iterates, and the proof given in \cite{JL} breaks down for the family considered here. In fact, it will be clear that our proof fails when $\theta = 0$. The requirement that $\theta \in \R \setminus \Q$ is Diophantine can likely be relaxed, but we chose it for convenience.

\subsection{Fatou components in several variables}

Let $F$ be a holomorphic self-map of a complex manifold $X$. The {\sl Fatou set} of $F$ is the set of points $p\in X$ for which there exists an open neighborhood $U\ni p$ such that the sequence of iterates $\{F^{\circ n}\}$ form a normal family on $U$. The connected components of the Fatou set of $F$ are called {\sl Fatou components} of $F$. A Fatou component $\Omega\subset X$ of $F$ is {\sl invariant} if $F(\Omega)=\Omega$. Following Bedford-Smillie \cite{BS}, an invariant Fatou component is called \emph{recurrent} if it contains a recurrent orbit, i.e. an orbit that accumulates at a point in $\Omega$. In a \emph{non-recurrent} invariant Fatou component all orbits eventually leave any compact subset.

For rational functions in one complex variable there is a complete description of all possible Fatou components and the dynamics of the map on such components is quite well understood. In particular, the invariant non-recurrent Fatou components are ``Leau-Fatou petals'' at a parabolic fixed point. All orbits in such petals converge to the fixed point and on such petals the map is conjugated to a translation  via the so-called ``Fatou coordinate''.

Despite significant recent progress, including the construction of wandering domains \cite{ABDPR, BB} and the classification of invariant Fatou components \cite{BS, LP}, the situation is not nearly as well understood in $\C^2$.

Let $F$ be an automorphism of $\C^2$. If $\Omega\subset\C^2$ is a Fatou component of $F$, we say that a holomorphic map $h:\Omega\to \C^2\cup\{\infty\}$ is a {\sl limit map} of $F$ on $\Omega$ if there exists a sequence $\{F^{\circ n_k}\}$ which converges uniformly on compacta of $\Omega$ to $h$ --- here, for the sake of uniformizing notation, we let $h\equiv \infty$ in case $\{F^{\circ n_k}\}$ compactly diverges to $\infty$.

\subsection{Fatou components of polynomial automorphisms.} If $F$ is a polynomial automorphism of $\mathbb C^2$, the Jacobian determinant $\delta$  is necessarily constant and different from $0$. When $|\delta|=1$ all Fatou components $\Omega$ of $F$  are recurrent, the so-called Siegel domains, and $h(\Omega) = \Omega$ for any limit map $h$. This does not complete the description, as it remains an open question whether $\Omega$ must be topologically trivial, see for example \cite{Bedford}.

In the case $|\delta| < 1$ the orbits in a recurrent Fatou component converge exponentially fast to either an attracting fixed point or to a $1$-dimensional properly embedded Riemann surface $\Sigma \subset \Omega$, see \cite{BS}. In the latter case, which could be called an \emph{attracting cylinder}, the action of $f$ on the invariant set $\Sigma$ is that of an irrational rotation, and $\Sigma$ is equivalent to either the disk or an annulus. Whether an annulus can actually occur is a pressing open question.

The non-recurrent case has been described in \cite{LP}, under the additional assumption $|\delta| < \frac{1}{\mathrm{deg}^2(f)}$. In this case all orbits converge to a parabolic-attracting fixed point, and the component is biholomorphic to $\mathbb C^2$, by a result of Ueda \cite{U0}, and $F$ is conjugate on $\Omega$ to a map $(z,w)\mapsto (z+1,w)$.

\subsection{Fatou components of holomorphic automorphisms.} Little is known about which other phenomena can occur when considering non-polynomial automorphisms of $\C^2$. An invariant Fatou component is called {\sl attracting} if  all the orbits in the component converge to the same (necessarily fixed) point  $p\in \C^2$. By \cite{PVW, RR}, a recurrent attracting Fatou component $\Omega$ is necessarily biholomorphic to $\C^2$, the spectrum of $dF_p$ is contained in the (open) unit disk and $F$ is conjugate to a polynomial triangular map on $\Omega$.

In \cite{BRS} (see also \cite{Rep} for the construction of multiple ``petals'') the authors constructed an attracting non-recurrent Fatou component biholomorphic to $\C\times \C^\ast$, where the map is semi-conjugate to a translation over $\C$. It is an open question whether all attracting Fatou components in $\mathbb C^2$ are conjugate to either $\mathbb C^2$ or $\mathbb C \times \C^\ast$.

Contrary to the polynomial case, there are known to exist holomorphic automorphisms with non-recurrent Fatou components that are not attracting. Examples of such maps were given in \cite{JL}, including the map $G$ mentioned in equation \eqref{lilovexample}. Here we prove the existence of these non-recurrent Fatou components for a considerably larger class of maps, prove these examples are all biholomorphic to $\mathbb C^2$, and construct Fatou coordinates, showing that the dynamics is conjugate to a translation. We note that in \cite{Rep2}, J. Reppekus, exploiting the example in \cite{BRS} and blowing-up, shows that there exist parabolic cylinders that are not biholomorphically equivalent to $\C\times \C^\ast$. Whether $\mathbb C^2$ and $\C\times \C^\ast$ are the only two possibilities is again an open question.

Another natural open question, partially addressed in \cite{JL, LP}, concerns the uniqueness of limit sets, i.e. whether all limit maps must have the same image.

\subsection{Outline of the paper.}

In section (2) we introduce several coordinate changes, defined only locally near the invariant $w$-axis. In section (3), Proposition \ref{h} we show that the map $F$ is locally conjugate to a map $H$ of the form
$$
H(u,w)=\left(u+1+\frac{A}{u}+O\left(\frac{1}{u^2}\right),\lambda w +O\left(\frac{1}{u^2}\right)\right).
$$
This simpler form is exploited in Proposition \ref{prop1} to prove the existence of a non-recurrent Fatou component on which the orbits converge to the $w$-axis. In section (4), Proposition \ref{prop2} we prove the existence of the Fatou components, which implies that the Fatou component is biholomorphically equivalent to $\mathbb C^2$.

In section (5) we show that maps $F$ satisfying the constraints in the theorem can actually be constructed as finite compositions of shears and overshears. Recall that such compositions form a dense subset of all automorphisms of $\C^2$ in the compact-open topology \cite{AL}. We note that $F$ cannot have constant Jacobian determinant, and therefore cannot be approximated by compositions of shears only.

\subsection*{Acknowledgments} In a first version of this paper, ``parabolic cylinders'' were named ``non-recurrent Siegel cylinders''. However, the term ``parabolic cylinders'' seems to be more appropriate, due to Property (2) in Definition~\ref{Def:parabolic-cyl}. We thank Eric Bedford for stimulating discussions about this and other facts related to the paper. We also thank the referee for  very useful comments which improved much the original paper. In particular, for finding a mistake in the original version of Lemma~\ref{lemma:sequence}.

\section{Preliminaries}\label{prelim}

 \textcolor{black}{ 
In this section we introduce various maps, together with useful estimates, that will serve in the next sections to prove our main result.
 }

\medskip

Throughout this paper we will use $\lambda=e^{2\pi i \theta}$ where $\theta\in\R\backslash\Q$ is {\sl diophantine}, i.e. there exist $c,r>0$ such that $|\lambda^n-1|\geq c n^{-r}$ for every $n\geq1$. Such numbers form a dense subset of the unit circle with full measure. Note that if $\lambda$ is diophantine then $\lambda^{-1}$ is also diophantine and satisfies the same estimates.
\medskip

We will be using the following notation. Let $u(x)$ and $v(x)$ be two functions. By writing $u(x)=O(v(x))$ we mean that there exist a constant $C>0$ such that $|u(x)|\leq C|v(x)|$ for all $x$ in a neighborhood of the origin where $u$ and $v$ are defined.  The notation $u(x)=o(v(x))$ as $x\rightarrow a$ means that $u(x)/v(x)\rightarrow 0$ as $x\rightarrow a$. In case of a sequence $u_n$ of complex numbers, the notation $u_n=O(v(n))$ has to be understood as $|u_n|\leq C|v(n)|$  for all $n\in \mathbb N$.

\medskip

The following was used in \cite{ABP}, we repeat the proof for convenience of the reader.

\begin{lemma}\label{lemma:lambda} There exist constants $C,r>0$ such that for every integer $n\geq1$ and for every $ m\geq0$,
$$
\left|\sum_{j=m}^{\infty}\lambda^{ jn}\right|<Cn^r.
$$
\end{lemma}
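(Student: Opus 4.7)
The plan is to reduce to a finite geometric sum and then invoke the Diophantine hypothesis on $\lambda$. Since the terms $\lambda^{jn}$ all have modulus $1$, the series does not converge in the usual sense; the statement has to be interpreted as a uniform bound on the partial sums $S_{m,N}(n):=\sum_{j=m}^{N}\lambda^{jn}$ as $N\to\infty$, which is also what all subsequent applications of the lemma will actually use.

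First I would observe that because $\theta\in\R\setminus\Q$, one has $\lambda^n\ne 1$ for every $n\ge1$, so the geometric-sum formula applies and
$$
S_{m,N}(n)=\lambda^{mn}\,\frac{1-\lambda^{(N-m+1)n}}{1-\lambda^{n}}.
$$
Taking absolute values, the numerator is bounded by $2$ and the bound
$$
|S_{m,N}(n)|\le \frac{2}{|1-\lambda^{n}|}
$$
is uniform in both $m\ge 0$ and $N\ge m$.

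Next I would apply the Diophantine estimate $|\lambda^{n}-1|\ge c\,n^{-r}$ from the hypothesis on $\theta$, which yields
$$
|S_{m,N}(n)|\le \frac{2}{c}\,n^{r}.
$$
Since this bound is independent of $N$, it passes to the supremum (or Abel limit) of the partial sums, giving the claim with $C=2/c$ and the same exponent $r$ as in the Diophantine condition.

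There is essentially no obstacle here beyond correctly reading the infinite sum as a uniform partial-sum bound; the argument is just the standard Dirichlet-type estimate for exponential sums, specialised to the fact that $\lambda$ avoids $1$ quantitatively along powers. I would include a brief remark that, by the same reasoning, the bound holds equally well for sums $\sum_{j=m}^{N}\lambda^{jn}$ with any $m\le N$, which is the form in which it will be used later.
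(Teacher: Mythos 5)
Your proof is correct and follows essentially the same route as the paper: the paper telescopes $\sum_j(\lambda^{n(j+1)}-\lambda^{nj})/(\lambda^n-1)$, which is exactly your geometric-sum computation, and then applies the Diophantine lower bound on $|\lambda^n-1|$. Your remark that the statement must be read as a uniform bound on the partial sums is also consistent with how the lemma is actually invoked later (only finite sums $\sum_{j=m}^{N}$ are ever used).
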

\begin{proof}
Let $N\geq m$. Since $\lambda=e^{2\pi i \theta}$ where $\theta\in\mathbb{R}\backslash\mathbb{Q}$ is diophantine, there exist $c,r>0$ such that $|\lambda^n-1|\geq cn^{-r}$ for all $n$. This gives the bound
$$
\left|\sum_{j=m}^{N}\lambda^{ nj}\right|  = \left|\sum_{j=m}^{N}\frac{\lambda^{ n(j+1)}-\lambda^{ nj}}{\lambda^{n}-1}\right|=
\left|\frac{1}{\lambda^{ n}-1}\sum_{j=m}^{N}(\lambda^{ n(j+1)}-\lambda^{ nj})\right|<\left|\frac{2}{\lambda^{ n}-1}\right|<Cn^r,
$$
and we are done.
\end{proof}

 \textcolor{black}{ 
 Let $\gamma:[0,\infty)\rightarrow [0,\infty)$ be an increasing function that satisfies $\gamma(2x)\geq 2\gamma(x)$ for all $x\geq 0$. For $R>0$  and $\delta>0$ we let
\begin{equation}\label{setK}
K_{R,\delta}:=\overline{\{u\in \C \mid Re(u)>-\gamma(\delta) \text{ and }  \text{arg}(u-R)\in[-3\pi/4,3\pi/4]\}},
\end{equation}
and
\[
U_{R,\delta}:=\{(u,w)\in \mathbb{C}^2\mid  u\in K_{R,\delta} \text{ and } |w|< \delta \}.
\]
To be precise, both the set $K_{R,\delta}$ and $U_{R,\delta}$ depend on the choice of the function $\gamma$, so that one should more appropriately name them $K_{R,\delta, \gamma}$ and $U_{R,\delta, \gamma}$. However, in order not to burden notation, and since no confusion should arise, we avoid mentioning $\gamma$ in this definition.
} 

Note that $U_{R_1,\delta_1}\subset U_{R_2,\delta_2}$ when
$R_2\leq R_1$ and $\delta_1\leq\delta_2$.

\begin{lemma}\label{lemma:sequence} Let $R>0$ \textcolor{black}{and $\delta>0$}. There exist $r>0$ and $\tilde{C}>0$ such that for every integer $n\geq1$, $m\geq0$ and \textcolor{black}{$u\in K_{R,\delta}$},
\begin{equation}\label{estimate}
\left|\sum_{j=m}^{\infty}\frac{\lambda^{nj}}{u+j}\right|<\frac{\tilde{C} n^r}{|u+m|}.
\end{equation}
In particular, the series $\sum_{j=0}^{\infty}\frac{\lambda^{nj}}{u+j}$ is converging uniformly on compacta of \textcolor{black}{$K_{R,\delta}$} for every $n\geq 1$.
\end{lemma}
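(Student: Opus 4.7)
The strategy is summation by parts combined with the uniform bound on partial sums from Lemma~\ref{lemma:lambda}. Set $T_N := \sum_{j=m}^{N} \lambda^{nj}$, with the convention $T_{m-1}=0$, so that by Lemma~\ref{lemma:lambda} we have $|T_N| \leq C n^r$ uniformly in $N \geq m-1$ and $m \geq 0$, where $C, r$ are the constants provided there. Writing $\lambda^{nj} = T_j - T_{j-1}$ and applying Abel summation gives, for every $N > m$,
\[
\sum_{j=m}^{N} \frac{\lambda^{nj}}{u+j} \;=\; \frac{T_N}{u+N} \;+\; \sum_{j=m}^{N-1} T_j\!\left(\frac{1}{u+j}-\frac{1}{u+j+1}\right) \;=\; \frac{T_N}{u+N} + \sum_{j=m}^{N-1}\frac{T_j}{(u+j)(u+j+1)}.
\]

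The key auxiliary fact I would then establish is a geometric lemma: there exists a constant $c=c(R,\delta)>0$ such that for every $u \in K_{R,\delta}$ and every pair of integers $j \geq m \geq 0$,
\[
|u+j| \;\geq\; c\big(|u+m| + (j-m)\big).
\]
The proof rests on two observations. First, the condition $\arg(u-R) \in [-3\pi/4, 3\pi/4]$ is preserved when one adds a nonnegative real number to $u$, so $u+m$ again belongs to $K_{R,\delta}$ and in particular satisfies the same cone condition. Second, this cone condition together with $\mathrm{Re}(u) > -\gamma(\delta)$ forces the explicit uniform lower bound $|u+m| \geq R/\sqrt{2}$, obtained by computing the distance from the origin to the complement of the leftward $\pi/4$-cone based at $R$. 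A short case analysis on the sign of $\mathrm{Re}(u+m)$ then yields the linear lower bound: when $\mathrm{Re}(u+m)\geq 0$ it is immediate, while for $\mathrm{Re}(u+m)<0$ one uses the cone condition to show $|\mathrm{Im}(u+m)| \gtrsim |u+m|$, which keeps $|u+j|$ from collapsing as $j$ grows.

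Given this estimate, I would control the tail via an integral comparison:
\[
\sum_{j=m}^{\infty} \frac{1}{|u+j|\,|u+j+1|} \;\leq\; \frac{1}{c^2}\sum_{k=0}^{\infty}\frac{1}{(|u+m|+k)^2} \;\leq\; \frac{C''}{|u+m|},
\]
where the last inequality combines $\sum_{k\geq 0}(|u+m|+k)^{-2} \leq |u+m|^{-2} + |u+m|^{-1}$ with the uniform lower bound $|u+m|\geq R/\sqrt{2}$. Substituting into the Abel decomposition, multiplying by $|T_j|\leq Cn^r$, and letting $N\to\infty$ kills the boundary term $T_N/(u+N)$ since $\mathrm{Re}(u+N)\to +\infty$, producing the desired bound $\tilde C n^r/|u+m|$ of \eqref{estimate}. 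Uniform convergence on compacta of $K_{R,\delta}$ for each fixed $n$ follows immediately from the uniformity of all the estimates in $u$.

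The main obstacle I anticipate is the geometric lemma. The angular constraint permits $\mathrm{Re}(u)$ to be negative (up to $-\gamma(\delta)$), so one cannot simply compare $|u+j|$ to $\mathrm{Re}(u+j)$; the cone condition has to be converted into an effective lower bound on $|\mathrm{Im}(u+j)|$ for small $j$, and into linear growth in $j$ for large $j$. Once this bookkeeping is done, the remainder of the argument is standard Abel-summation and integral comparison.
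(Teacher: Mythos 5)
Your argument is correct and follows essentially the same route as the paper's proof: Abel summation against the uniformly bounded partial sums of Lemma~\ref{lemma:lambda}, followed by bounding $\sum_j |(u+j)(u+j+1)|^{-1}$ by a constant times $|u+m|^{-1}$ using the geometry of $K_{R,\delta}$. Your explicit linear lower bound $|u+j|\geq c\,(|u+m|+(j-m))$ is a cleaner, manifestly $m$-uniform version of the terse justification the paper gives for that same final step.
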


\begin{proof}
Let $N\geq m$. Lemma \ref{lemma:lambda} gives
\begin{equation*}
\begin{aligned}
\left|\sum_{j=m}^{N}\frac{\lambda^{nj}}{u+j}\right| & = \left|\frac{1}{u+N}\sum_{j=m}^{N}\lambda^{nj}-\sum_{j=m}^{N-1}\left(\frac{1}{u+j+1}-\frac{1}{u+j}\right)\sum_{k=m}^{j}\lambda^{nk}\right|\\
& <  \frac{Cn^r}{|u+N|}+Cn^r\cdot \sum_{j=m}^{N-1}\frac{1}{|(u+j+1)(u+j)|}<\frac{\tilde{C} n^r}{|u+m|},
\end{aligned}
\end{equation*}
with the constant $\tilde{C}$  chosen to be independent from $N$ and $u$, and we are done. \textcolor{black}{Note that here we have used the fact that for every $u\in K_{R,\delta}$ and for every $m\geq 0$ we have $|u+m|<|u+k|$ for all $k>M:=\max\{2\gamma(\delta),m\}$, hence there is a constant $C'(m, \gamma(\delta))$ such that $$\frac{1}{\min_{k\geq m}{ |u+k|}}=\frac{1}{\min_{m\leq k \leq M}{ |u+k|}}\leq \frac{C'}{|u+m|}$$ for all $u\in K_{R,\delta}$. }
\end{proof}

\begin{lemma}\label{Phi} Let $g:\C \to \C$ be an entire function such that $g(0)=g'(0)=0$. Let $g(w)=\sum_{\ell=2}^{\infty}d_{\ell}w^{\ell}$ be its expansion at $0$. Then for every $\delta>0$, there exists $R=R(\delta)>0$, which depends continuously on $\delta$, such that the map
$$\Phi(u,w):=\left(u,w+\lambda^{-1}\sum_{\ell=2}^{\infty}\left(d_{\ell}w^{\ell}\sum_{k=0}^{\infty}\frac{\lambda^{(\ell-1)k}}{u+k} \right)\right)$$
is univalent on $U_{R,\delta}$. Moreover $\Phi(u,w)=\left(u,w+O\left(\frac{1}{u}\right)\right)$ and for every $\delta>0$ and $R>0$ there exists  $R'\geq R$ such that $\Phi$ is univalent on $U_{R',\delta}$ and $\Phi(U_{R',\delta})\subset U_{R,2\delta}$.
Also, for every $\delta>0$ there exists $R''\geq R(\delta)$ such that $U_{R'',\delta/2}\subset \Phi(U_{R(\delta),\delta})$.
\end{lemma}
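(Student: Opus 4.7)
The plan is to write $\Phi(u,w)=(u,w+\psi(u,w))$ with
\[
\psi(u,w):=\lambda^{-1}\sum_{\ell\geq 2} d_\ell\, w^\ell\, S_\ell(u), \qquad S_\ell(u):=\sum_{k\geq 0}\frac{\lambda^{(\ell-1)k}}{u+k},
\]
and to show that for $R(\delta)$ chosen sufficiently large (continuously in $\delta$) the perturbation $\psi$ and its $w$-derivative are uniformly small on $U_{R(\delta),\delta}$. The four conclusions of the lemma then follow from standard estimates together with a contraction-mapping argument for the inversion.

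First I would apply Lemma \ref{lemma:sequence} with $n=\ell-1$ to obtain $|S_\ell(u)|\leq \tilde C(\ell-1)^r/|u|$ on $K_{R,\delta}$; an analogous Abel summation on the termwise derivative (justified by the uniform convergence granted by Lemma \ref{lemma:sequence}) yields $|\partial_u S_\ell(u)|\leq \tilde C'(\ell-1)^{r'}/|u|^2$. Since $g$ is entire with $g(0)=g'(0)=0$, the weighted majorant $\sum_\ell |d_\ell|(\ell-1)^r s^\ell$ has infinite radius of convergence, so for $|w|<\delta$ and $u\in K_{R,\delta}$,
\[
|\psi(u,w)| \leq \frac{M(\delta)}{|u|}, \qquad |\partial_w\psi(u,w)| \leq \frac{M'(\delta)}{|u|}, \qquad |\partial_u\psi(u,w)| \leq \frac{M''(\delta)}{|u|^2},
\]
with $M,M',M''$ continuous in $\delta$. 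This already yields $\Phi(u,w)=(u,w+O(1/u))$. A direct geometric check of the definition of $K_{R,\delta}$ (splitting on the sign of $\mathrm{Re}(u-R)$) shows that every $u\in K_{R,\delta}$ satisfies $|u|\geq R/\sqrt{2}$, so setting
\[
R(\delta):=\max\{1,\,2\sqrt{2}\,M'(\delta)\}
\]
makes $|\partial_w\psi|\leq 1/2$ on $U_{R(\delta),\delta}$, and $R(\delta)$ is continuous in $\delta$.

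Univalence on $U_{R(\delta),\delta}$ is then immediate: if $\Phi(u_1,w_1)=\Phi(u_2,w_2)$, the first coordinates give $u_1=u_2=:u$, and the mean value inequality applied to $\psi(u,\cdot)$ yields $|w_1-w_2|\leq \tfrac{1}{2}|w_1-w_2|$, forcing $w_1=w_2$. For the inclusion $\Phi(U_{R',\delta})\subset U_{R,2\delta}$, the first coordinate is unchanged and $K_{R',\delta}\subset K_{R,2\delta}$ whenever $R'\geq R$ (here I use $\gamma(2\delta)\geq \gamma(\delta)$ together with the already noted monotonicity of the nested sets), while $|w+\psi(u,w)|\leq \delta+M(\delta)/|u|\leq 2\delta$ as soon as $|u|\geq M(\delta)/\delta$, which is automatic on $K_{R',\delta}$ for $R'$ sufficiently large. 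Finally, for $U_{R'',\delta/2}\subset \Phi(U_{R(\delta),\delta})$, given $(u^*,w^*)\in U_{R'',\delta/2}$ I set $u:=u^*$ and solve $w+\psi(u^*,w)=w^*$ by the contraction $T(w):=w^*-\psi(u^*,w)$: for $R''\geq R(\delta)$ large enough that $M(\delta)/R''\leq \delta/2$, $T$ sends the closed disk $\{|w|\leq \delta\}$ into itself with Lipschitz constant $\leq 1/2$, giving a unique fixed point $w$; since $u^*\in K_{R'',\delta/2}\subset K_{R(\delta),\delta}$, the resulting preimage lies in $U_{R(\delta),\delta}$. The main obstacle I anticipate is the bookkeeping of the double series, in particular the justification of termwise differentiation and the absorption of the $(\ell-1)^r$ growth from Lemma \ref{lemma:sequence} into the entire majorant; once these estimates are in hand and the geometric bound $|u|\geq R/\sqrt{2}$ is noted, the four claims reduce to standard contraction and open-mapping arguments.
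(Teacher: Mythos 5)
Your proposal is correct and follows essentially the same route as the paper: well-definedness and the $O(1/u)$ estimate via Lemma~\ref{lemma:sequence}, injectivity from smallness of the $w$-perturbation (your derivative bound is equivalent to the paper's divided-difference bound $\bigl|\tfrac{w^{\ell}-w'^{\ell}}{w-w'}\bigr|\leq \ell\delta^{\ell-1}$), and the two inclusions from the same uniform bound $|\psi|\leq M(\delta)/|u|$. The only cosmetic difference is that you solve $w+\psi(u^*,w)=w^*$ by a contraction-mapping argument where the paper invokes Rouch\'e's theorem; both are equally valid here.
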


\begin{proof} Let  \textcolor{black}{$R,\delta>0$}. By Lemma \ref{lemma:sequence} the map $\Phi(u,w)$ is a well defined holomorphic map on  \textcolor{black}{$\{(u,w)\in \mathbb{C}^2\mid u\in K_{R,\delta} \}$} and $\Phi(u,w)=\left(u,w+O\left(\frac{1}{u}\right)\right)$. In order to check  injectivity, first observe that $\Phi(u,w)=\Phi(u',w')$ implies $u=u'$. Therefore
$\Phi(u,w)=\Phi(u',w')$ if and only if
$$w-w'+\lambda^{-1}\sum_{\ell=2}^{\infty}d_{\ell}(w^{\ell}-w'^{\ell})\sum_{k=0}^{\infty}\frac{\lambda^{(\ell-1)k}}{u+k}=0.$$
Assuming that $w\neq w'$ we can divide this equation by $w-w'$ to obtain
$$1+\lambda^{-1}\sum_{\ell=2}^{\infty}d_{\ell}\left(\frac{w^{\ell}-w'^{\ell}}{w-w'}\right)\sum_{k=0}^{\infty}\frac{\lambda^{(\ell-1)k}}{u+k}=0.$$
Since $\left|\frac{w^{\ell}-w'^{\ell}}{w-w'}\right|\leq \ell\delta^{\ell-1}$,  taking into account Lemma \ref{lemma:sequence} and that $g$ is entire, we can choose $R$ large enough so that
$$\left|\sum_{\ell=2}^{\infty}d_{\ell}\left(\frac{w^{\ell}-w'^{\ell}}{w-w'}\right)\sum_{k=0}^{\infty}\frac{\lambda^{(\ell-1)k}}{u+k}\right|<\frac{\tilde{C}}{|u|}\sum_{\ell=2}^{\infty}|d_{\ell}|\delta^{\ell-1}\ell(\ell-1)^{r}<1$$
everywhere on $U_{R,\delta}$, and hence $\Phi(u,w)=\Phi(u',w')$ implies $(u,w)=(u',w')$. This last inequality also implies that $R$ depends continuously on $\delta$.

In order to prove the final statement, let
\begin{equation}\label{h(u,w)}
h(u,w):=\lambda^{-1}\sum_{\ell=2}^{\infty}\left(d_{\ell}w^{\ell}\sum_{k=0}^{\infty}\frac{\lambda^{(\ell-1)k}}{u+k} \right)=O\left(\frac{1}{u}\right),
\end{equation}
and let $R''\geq R(\delta)$ be such that $|h(u,w)|<\frac{\delta}{4}$ for  \textcolor{black}{$u\in K_{R'',\frac{\delta}{2}}$} and $|w|\leq \delta$. Let \textcolor{black}{$u_0\in K_{R'',\frac{\delta}{2}}$} and let $|w_0|\leq \delta/2$. Let $C:=\{w\in \C: |w-w_0|=\delta/2\}$. By the triangular inequality $|w|\leq \delta$ for all $w\in C$. Thus, for all $w\in C$,
\[
|w-w_0|=\frac{\delta}{2}>|h(u_0, w)|.
\]
Hence, by the Rouch\'e theorem, the functions $w\mapsto w-w_0$ and $w\mapsto w+h(u_0,w)-w_0$ have the same number of zeros in $\{w\in \C: |w-w_0|<\delta/2\}$. In particular, there exists $w_1\in \C$ such that $|w_1|<\delta$ and $w_1+h(u_0,w_1)=w_0$. Therefore, $(u_0, w_0)\in \Phi(U_{R(\delta), \delta})$. By the arbitrariness of $(u_0, w_0)$, this proves that $U_{R'', \delta/2}\subset \Phi(U_{R(\delta), \delta})$.
\textcolor{black}{Finally it follows from \eqref{h(u,w)} that for sufficiently large $R'>R(\delta)$ we have $|h(u,w)|<\frac{1}{2}$ on $\overline{U_{R',\delta}}$, hence $\Phi(U_{R',\delta})\subset U_{R,2\delta}$.}

\end{proof}

\begin{lemma}\label{psi}  Let $f:\C \to \C$ be an entire function such that $f(0)=1$. Let $f(w)=1+\sum_{\ell=1}^{\infty}d_{\ell}w^{\ell}$ be its expansion at $0$. The map $\Psi:\C^2\to \C^2$ defined as
$$
\Psi(u,w):=\left(u+\sum_{\ell=1}^{\infty}\frac{d_{\ell}}{\lambda^{\ell}-1}w^{\ell},w \right)
$$
 is a holomorphic automorphism of $\mathbb{C}^2$. \textcolor{black}{Furthermore, let $C,r>0$ be as in Lemma \ref{lemma:lambda} and define
 \begin{equation}\label{good-lambda}
 \gamma(\delta):=C\sum_{\ell=1}^{\infty}|d_\ell|\ell^r\delta^\ell.
 \end{equation}
  Then $\gamma:[0,\infty)\rightarrow[0,\infty)$ is an increasing function satisfying $\gamma(2\delta)\geq2\gamma(\delta)$ for all $\delta\geq 0$.} Moreover,  for every $\delta>0$ we have $||\Psi(u,w)-(u,w)||<\gamma(\delta)$ for all $(u,w)\in \mathbb{C}\times\{w\in \C: |w|\leq \delta\}$. \textcolor{black}{Finally,  using \eqref{good-lambda} in the definition of $K_{R,\delta}$ (see \eqref{setK} ), we also have that}, for every $\delta>0$ there exists $M_\delta>0$ such that for all $R\geq M_\delta$
\textcolor{black}{\[
\Psi(U_{2R,\delta})\subset U_{R,2\delta}.
\]}
\end{lemma}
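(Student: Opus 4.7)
The plan is to verify the four assertions in succession. The key technical input is the Diophantine bound $|\lambda^\ell - 1| \geq c\,\ell^{-r}$; indeed, inspecting the proof of Lemma~\ref{lemma:lambda} shows that its constant $C$ may be taken so that $\frac{1}{|\lambda^\ell - 1|} \leq \frac{C}{2}\ell^r$ for all $\ell \geq 1$. This single estimate powers every subsequent step.

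To see that $\Psi$ is a well-defined holomorphic automorphism, I would observe that $f$ entire forces $|d_\ell|^{1/\ell} \to 0$, so the polynomial correction $\ell^r$ does not affect convergence and the series $\sum \frac{d_\ell}{\lambda^\ell - 1}\,w^\ell$ defines an entire function of $w$. Thus $\Psi$ is a shear in the first coordinate whose inverse is obtained by flipping the sign of the series, so $\Psi \in \Aut(\C^2)$. The same growth argument shows that $\gamma(\delta)$ is finite for every $\delta \geq 0$; each term $C|d_\ell|\ell^r \delta^\ell$ is non-decreasing in $\delta$, so $\gamma$ is increasing, and the termwise inequality $(2\delta)^\ell = 2^\ell\delta^\ell \geq 2\delta^\ell$ (valid for $\ell \geq 1$) gives $\gamma(2\delta) \geq 2\gamma(\delta)$. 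The norm estimate then follows at once:
\[
\|\Psi(u,w) - (u,w)\| = \Bigl|\sum_{\ell \geq 1}\frac{d_\ell}{\lambda^\ell - 1}\,w^\ell\Bigr| \leq \frac{C}{2}\sum_{\ell \geq 1}|d_\ell|\,\ell^r\,\delta^\ell = \frac{\gamma(\delta)}{2} < \gamma(\delta)
\]
whenever $|w| \leq \delta$.

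The substantive step is the inclusion $\Psi(U_{2R,\delta}) \subset U_{R,2\delta}$. Writing $\Psi(u,w) = (u + h(w), w)$ with $|h(w)| < \gamma(\delta)$, the condition on $w$ is immediate, so only membership $u + h(w) \in K_{R, 2\delta}$ requires work. The crucial reformulation is that the sector condition $\arg(z - R) \in [-3\pi/4, 3\pi/4]$ is equivalent to
\[
\mathrm{Re}(z) + |\mathrm{Im}(z)| \geq R,
\]
since the complementary open cone is precisely $\{z : \mathrm{Re}(z) + |\mathrm{Im}(z)| < R\}$. Under this reformulation both defining inequalities of $K_{R,2\delta}$ become additive under the perturbation $h(w)$. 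The real-part condition is checked from $\mathrm{Re}(u + h(w)) \geq \mathrm{Re}(u) - |h(w)| \geq -\gamma(\delta) - \gamma(\delta) \geq -\gamma(2\delta)$, using the superadditivity of $\gamma$; the sector condition follows from
\[
\mathrm{Re}(u + h(w)) + |\mathrm{Im}(u + h(w))| \geq \mathrm{Re}(u) + |\mathrm{Im}(u)| - 2|h(w)| \geq 2R - 2\gamma(\delta),
\]
which is at least $R$ as soon as $R \geq 2\gamma(\delta)$. Hence $M_\delta := 2\gamma(\delta)$ works.

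The main obstacle I anticipate is the geometric bookkeeping: the sector apex moves from $2R$ in the source to $R$ in the target, and one must verify that the extra margin of $R$ absorbs the translation by $h(w)$ of size less than $\gamma(\delta)$. Recasting the sector constraint as the additive inequality $\mathrm{Re}(z) + |\mathrm{Im}(z)| \geq R$ turns this geometry into a routine scalar estimate, so once that reformulation is in place everything reduces to the superadditivity of $\gamma$ and the uniform bound on $h$.
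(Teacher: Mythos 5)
Your proof is correct and follows essentially the same route as the paper: absolute convergence of the shear series via the Diophantine bound $|\lambda^\ell-1|\geq c\ell^{-r}$, termwise monotonicity and $2^\ell\geq 2$ for the properties of $\gamma$, and the superadditivity $\gamma(2\delta)\geq 2\gamma(\delta)$ to absorb the translation by $h(w)$ in the final inclusion. The only difference is that you make explicit the sector estimate via the reformulation $\mathrm{Re}(z)+|\mathrm{Im}(z)|\geq R$ and the choice $M_\delta=2\gamma(\delta)$, a step the paper dismisses with ``one can easily deduce''; your version is a welcome completion of that detail.
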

\begin{proof}
Clearly the series $\sum_{\ell=1}^{\infty}d_{\ell}w^{\ell}$ is absolutely convergent on compacta of  $\mathbb{C}$. Recall that by our assumption $\lambda$ satisfies the condition $| \lambda^n-1|>cn^{-r}$, for some $c,r>0$. It follows that
\begin{equation*}
\left|\sum_{\ell=1}^{\infty}\frac{d_{\ell}}{\lambda^{\ell}-1}w^{\ell}\right|<C\sum_{\ell=1}^{\infty}|d_\ell|\ell^r|w|^\ell.
\end{equation*}
From this last inequality we can deduce that the series which appears in the first coordinate of the map $\Psi$ is absolutely convergent on compacta in $\C$. Therefore $\Psi$ is an automorphism of $\C^2$.

\textcolor{black}{From the previous considerations and the definition of $\gamma$, it follows immediately that  $||\Psi(u,w)-(u,w)||<\gamma(\delta)$ for all $(u,w)\in \mathbb{C}\times\{w\in \C: |w|\leq \delta\}$.}

\textcolor{black}{In order to prove the last statement, let us write $(u_1,w_1)=\Psi(u_0,w_0)$ and observe that $w_1=w_0$. Since $(u_0,w_0)\in U_{R,\delta}$ it follows that ${\sf Re}(u_0)>-\gamma(\delta)$ which  implies  $${\sf Re}(u_1)={\sf Re}(u_0)-\gamma(\delta)>-2\gamma(\delta)>-\gamma(2\delta).$$
From here one can easily deduce that there exists $M_\delta>0$, such that for all $R>M_\delta$ we have $\Psi(U_{2R,\delta})\subset U_{R,2\delta}$.}
\end{proof}

\begin{lemma}\label{tau} Let $f:\C \to \C$ be an entire function with $f(0)=0$. Let $f(w)=\sum_{\ell=1}^{\infty}d_{\ell}w^{\ell}$ be its expansion at $0$.
Then for every $\delta>0$, there exists $R>0$ such that the map
$$\tau(u,w):=\left(u- \sum_{k=0}^{\infty}\frac{f(\lambda^kw)}{u+k},w\right)$$
is univalent on $U_{R,\delta}$. Moreover $R$ depends continuously on $\delta$ and
$\tau(u,w)=\left(u+O\left(\frac{1}{u}\right),w \right)$. In particular, for every $\delta>0$ and $R>0$ there exists $R'\geq R$ such that $\tau$ is univalent on $U_{R',\delta}$ and \textcolor{black} {$\tau(U_{R',\delta})\subset U_{R,2\delta}$}.
Also, for every $\delta>0$ there exists $R''\geq R(\delta)$ such that \textcolor{black} {$U_{R'',\delta/2}\subset \tau(U_{R(\delta),\delta})$}.

\end{lemma}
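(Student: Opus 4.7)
My plan is to mimic the proof of Lemma~\ref{Phi} in four steps: convergence, asymptotic, injectivity, and the two containments. The new ingredient is to expand $f(\lambda^k w)=\sum_{\ell=1}^\infty d_\ell\lambda^{\ell k}w^\ell$ and swap summation orders, obtaining
$$\sum_{k=0}^\infty\frac{f(\lambda^k w)}{u+k}=\sum_{\ell=1}^\infty d_\ell w^\ell\sum_{k=0}^\infty \frac{\lambda^{\ell k}}{u+k}.$$
Applying Lemma~\ref{lemma:sequence} with $n=\ell$ and $m=0$ bounds the inner sum by $\tilde C\ell^r/|u|$ on $K_{R,\delta}$, and since $f$ is entire, $\sum_\ell|d_\ell|\ell^r\delta^\ell$ converges for every $\delta>0$. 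This already yields holomorphicity of $\tau$ on $U_{R,\delta}$ and the asymptotic $\tau(u,w)=(u+O(1/u),w)$ with implicit constant continuous in $\delta$.

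For injectivity, $\tau$ fixes the second coordinate, so $\tau(u_1,w_1)=\tau(u_2,w_2)$ forces $w_1=w_2=:w$. The first-coordinate equation then rearranges, exactly as in the proof of Lemma~\ref{Phi}, to
$$(u_1-u_2)\Bigl[1+\sum_{k=0}^\infty\frac{f(\lambda^k w)}{(u_1+k)(u_2+k)}\Bigr]=0.$$
Setting $M_\delta:=\max_{|z|\leq\delta}|f(z)|$ and using that on $K_{R,\delta}$ one has $|u_i+k|\gtrsim R$ for $k\lesssim R$ and $|u_i+k|\geq k/2$ for $k>2\gamma(\delta)$, I would bound the whole series by $M_\delta\cdot O(1/R)$. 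For $R=R(\delta)$ chosen large enough (depending continuously on $\delta$), the bracket is nonzero, so $u_1=u_2$.

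The two containments then follow from the asymptotic by standard perturbation arguments. For $\tau(U_{R',\delta})\subset U_{R,2\delta}$, pick $R'$ so large that $|\tau_1(u,w)-u|<\epsilon$ on $U_{R',\delta}$ with $\epsilon\leq\gamma(\delta)$ and $R'\geq R+2\epsilon$; then the two inequalities ${\sf Re}(\tilde u)>-\gamma(2\delta)$ and ${\sf Re}(\tilde u)+|{\sf Im}(\tilde u)|\geq R$ defining $K_{R,2\delta}$ both persist, using $\gamma(2\delta)-\gamma(\delta)\geq\gamma(\delta)\geq\epsilon$. For $U_{R'',\delta/2}\subset\tau(U_{R(\delta),\delta})$, fix $(\tilde u_0,w_0)\in U_{R'',\delta/2}$ and apply Rouch\'e's theorem on the circle $\{|u-\tilde u_0|=\rho\}$ with $\rho:=\gamma(\delta/2)$ to the pair $u\mapsto u-\tilde u_0$ and $u\mapsto u-\tilde u_0-\sum_k f(\lambda^k w_0)/(u+k)$; for $R''$ large enough the perturbation is $<\rho$ on that circle, yielding a unique preimage $u_0$ with $|u_0-\tilde u_0|<\rho$, and the choices of $\rho$ and $R''$ place $u_0$ in $K_{R(\delta),\delta}$ via $\gamma(\delta)\geq 2\gamma(\delta/2)$ and $R''\geq R(\delta)+2\rho$. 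The main technical point is the $O(1/R)$ bound on $\sum_k 1/(|u_1+k||u_2+k|)$ in the injectivity step; it rests on the non-convex geometry of $K_{R,\delta}$ (splitting the sum at $k\simeq R$) and, notably, does not require the Diophantine hypothesis.
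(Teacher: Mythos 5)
Your proposal is correct and follows essentially the same route as the paper's proof: expand $f(\lambda^k w)$, interchange the sums, and invoke Lemma~\ref{lemma:sequence} for convergence and the $O(1/u)$ asymptotic; prove injectivity by dividing the first-coordinate equation by $u_1-u_2$ and making the resulting series $\sum_k f(\lambda^k w)/((u_1+k)(u_2+k))$ small in modulus for $R$ large; and obtain the containments by perturbation and Rouch\'e (correctly transposed to the $u$-variable, since $\tau$ perturbs the first coordinate). You merely supply more detail than the paper where it says the last statements ``follow as in Lemma~\ref{Phi}'', and your observations --- that the key quantitative input is $|u+k|\gtrsim R+k$ on $K_{R,\delta}$, and that the Diophantine condition enters only through the convergence step --- are accurate.
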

\begin{proof} We first prove that the map is well defined, {\sl i.e.}, we prove that for every  \textcolor{black}{$R,\delta>0$} the series
\begin{equation}\label{prva}
\sum_{k=0}^{\infty}\frac{f(\lambda^kw)}{u+k}
\end{equation}
converges uniformly on compacta of \textcolor{black}{$\{(u,w)\in \mathbb{C}^2\mid u\in K_{R,\delta} \}$}.
By Lemma \ref{lemma:sequence},
\begin{equation*}
\left|\sum_{k=0}^{N}\frac{f(\lambda^kw)}{u+k}\right|=\left|\sum_{k=0}^{N}\sum_{j=1}^{\infty}\frac{\lambda^{kj}d_jw^j}{u+k}\right|
=\left|\sum_{j=1}^{\infty}d_jw^j\sum_{k=0}^{N}\frac{\lambda^{kj}}{u+k}\right|
<\frac{\tilde C}{|u|}\sum_{j=1}^{\infty}|d_j||w|^jj^r.
\end{equation*}
Since $f$ is entire, the last series converges uniformly on compacta, and so does the series~\eqref{prva}.

In order prove injectivity, we first observe that $\tau(u,w)=\tau(u',w')$ implies that $w=w'$. If $u\neq u'$ then $\tau(u,w)=\tau(u',w')$  if and only if
$$u-u'-\sum_{k=0}^{\infty}\frac{f(\lambda^kw)}{u+k}-\frac{f(\lambda^kw)}{u'+k}=0.$$
Dividing this equation by $u-u'$ we obtain
\begin{equation}\label{Eq:inter-1}
1+\sum_{k=0}^{\infty}\frac{f(\lambda^kw)}{(u+k)(u'+k)}=0.
\end{equation}
Given $\delta>0$, we can find $R$ large enough such that for all \textcolor{black}{$u,u'\in K_{R,\delta}$} we have
 $$\sum_{k=0}^{\infty}\frac{1}{|(u+k)(u'+k)|}<\frac{1}{\sup_{|w|<\delta}|f|}.$$
Therefore, \eqref{Eq:inter-1} cannot be satisfied in $U_{R,\delta}$, and hence $\tau$ is  injective in $U_{R,\delta}$.

By the previous considerations it follows also that $\tau(u,w)=\left(u+O\left(\frac{1}{u}\right),w \right)$.

The last statement follows by applying Rouch\'e's theorem as in  Lemma \ref{Phi}.
\end{proof}

\textcolor{black}{
\begin{remark} It is worth noticing that, except for Lemma~\ref{psi} where we need to choose a suitable $\gamma$ in the definition of the set $K_{R,\delta}$, for the other lemmas, any choice of $\gamma$ works well.
\end{remark}}

\section{Non-recurrent Fatou component}
Let $F$ be a holomorphic automorphism of $\C^2$ of the form
\begin{equation}\label{form}
F(z,w)=\left(z+f(w)z^2+O(z^3),\lambda w +g(w)z+O(z^2)\right),
\end{equation}
where $f$ and $g$ are entire functions in $\C$, $f(0)\neq 0$ and $g(w)=O(w^2)$. Notice that the inverse $F^{-1}$ has the same form as $F$. From now on we assume without loss of generality that $f(0)=1$, since otherwise we can simply conjugate $F$ with a dilatation in the first factor.

\textcolor{black}{The aim of this section is to show that  $F$  has an invariant non-recurrent Fatou component $\Omega$ with $\omega$-limit set $\{0\}\times \C \subset \partial\Omega$ (Proposition~\ref{prop1}). To achieve this, we use the maps introduced in the previous section to change coordinates and get estimates allowing to show that $F$ maps
sets of the type ``a small sector times a disc of radius $\delta$'' (which, in the $(u,w)$ coordinates, correspond to the sets $U_{R,\delta}$  introduced in the previous section)
into sets of the same type. Moreover, we prove that for each orbit of $F$ on such sets, the first variable moves toward $0$, while the modulo of the second variable stays close to the modulo of the second variable of the starting point of the orbit. The Fatou component $\Omega$ is then defined as the Fatou component of $F$ which contains the union of all such (suitably chosen) ``small sectors times discs'' sets.     }
\medskip

\textcolor{black}{Let $f(w)=1+\sum_{\ell=1}^{\infty}d_{\ell}w^{\ell}$ be the expansion of $f$ at $0$, and let $C>1$ such that
$$\left|\sum_{\ell=1}^{\infty}\frac{d_{\ell}}{\lambda^{\ell}-1}w^{\ell}\right|<C\sum_{\ell=1}^{\infty}|d_\ell|\ell^r|w|^\ell.$$ Define
$$\gamma(\delta):=C\sum_{\ell=1}^{\infty}|d_\ell|\ell^r\delta^\ell,$$
and observe that $\sup_{|w|\leq\delta}|f(w)-1|\leq \gamma(\delta)$ and $\gamma(2\delta)>2\gamma(\delta)$ for all $\delta>0$.}

\medskip

\textcolor{black}{\noindent{\sl Assumption:} From now on, without further mentioning it, we use the above function $\gamma$  in the definition of the sets $K_{R,\delta}$ (see \eqref{setK}) and  $U_{R,\delta}$.}

\medskip

\textcolor{black}{We define the biholomorphic map $\Theta:\C^*\times \C\to \C^*\times \C$ as
\begin{equation}\label{Theta}
\Theta(u,w):=(-\frac{1}{u},w).
\end{equation}
Since  $F$ is an automorphism of $\C^2$ which leaves invariant the affine line $\{0\}\times\C$, it follows that the map
$$\tilde{F}:= \Theta^{-1}\circ F\circ\Theta$$
is a well defined biholomorphic map $\tilde{F}:\C^*\times \C\to \C^*\times \C$. A quick computation shows
$$
\tilde{F}(u,w)=\left(u+f(w)+O\left(\frac{1}{u}\right), \lambda w-\frac{g(w)}{u}+O\left(\frac{1}{u^2}\right)  \right)
.$$}
Let  $\delta'>0$  and observe \textcolor{black}{that the set $U_{R',\delta'}\subset\C^*\times\C$ for all $R'>0$. Therefore, $\tilde{F}$
 is well defined and univalent on $U_{R',\delta'}$}.

 Moreover, given  $R''>0$, we can find  $R'>0$ such that $\tilde F(U_{R',\delta'})\subset U_{R'',2\delta'}$.
 \textcolor{black}{In order to see this, let $(u_1,w_1)=\tilde F(u_0,w_0)$. Since $|w_1|=|w_0|+O(1/u
 _0)$ it is easy to see that for sufficiently large $R'$ we obtain $|w_1|<2\delta'$. Also, observe that
 \begin{equation*}
 \begin{split}
  {\sf Re}(u_1)&={\sf Re}(u_0)+1+{\sf Re}(f(w)-1)+{\sf Re}(O\left(\frac{1}{u}\right))\\&>-2\gamma(\delta')+1-O\left(\frac{1}{u}\right)>-\gamma(2\delta'),
 \end{split}
 \end{equation*}
 where the last inequality holds for all sufficiently large $R'$.}

 Let $g(w)=\sum_{\ell=2}^{\infty}b_{\ell}w^{\ell}$ be the expansion of $g$ at $0$ and  let $\Phi$ be as in Lemma \ref{Phi}.

 Fix $\delta>0$. By Lemma \ref{Phi}, there exists $R''>0$  such that $\Phi^{-1}$ is well defined and univalent on $U_{R'',4\delta}$. By the previous considerations, there exists  $R'\geq R'_0(\delta)$ such that $\tilde F(U_{R',2\delta})\subset U_{R'',4\delta}$ and finally, by Lemma \ref{Phi}, there exists $R>0$ such that $\Phi$ is univalent on $U_{R, \delta}$ and $\Phi(U_{R,\delta})\subset U_{R',2\delta}$. Thus,
 \[
 G:=\Phi^{-1}\circ \tilde{F}\circ\Phi
 \]
 is well defined and univalent on $U_{R,\delta}$.

\begin{lemma}\label{G-form}  For $(u,w)\in U_{R,\delta}$, we have
\[
G(u,w)=\left(u+f(w)+O\left(\frac{1}{u}\right),\lambda w+O\left(\frac{1}{u^2}\right) \right).
\]
\end{lemma}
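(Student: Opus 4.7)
The strategy is to compute $G = \Phi^{-1}\circ\tilde F\circ \Phi$ one factor at a time, tracking the two coordinates separately. The map $\Phi$ was designed precisely to cancel the leading $-g(w)/u$ obstruction in the second coordinate of $\tilde F$, and the proof amounts to making that cancellation explicit.

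\textbf{Step 1: Apply $\Phi$.} Write $\Phi(u,w) = (u, w + h(u,w))$ with
\[
h(u,w) = \lambda^{-1}\sum_{\ell=2}^{\infty}b_{\ell}w^{\ell}\sum_{k=0}^{\infty}\frac{\lambda^{(\ell-1)k}}{u+k} = O\!\left(\tfrac{1}{u}\right),
\]
the $O$-bound being uniform on $U_{R,\delta}$ by Lemma~\ref{lemma:sequence}. By differentiating the series termwise and applying Lemma~\ref{lemma:sequence} once more, one also gets $\partial_u h = O(1/u^2)$ uniformly on $U_{R,\delta}$.

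\textbf{Step 2: Apply $\tilde F$.} Since $h(u,w) = O(1/u)$ and both $f$ and $g$ are entire, Taylor expanding at $w$ gives
\[
\tilde F(\Phi(u,w)) = \Bigl(u + f(w) + O\!\left(\tfrac1u\right),\; \lambda w + \lambda h(u,w) - \tfrac{g(w)}{u} + O\!\left(\tfrac{1}{u^2}\right)\Bigr),
\]
where for the second coordinate we used $g(w+h) = g(w) + O(1/u)$ so that $g(w+h)/u = g(w)/u + O(1/u^2)$.

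\textbf{Step 3: The telescoping identity.} The heart of the proof is the exact identity
\[
h(u+1,\lambda w) - \lambda h(u,w) = -\tfrac{g(w)}{u},
\]
which follows by shifting $k \mapsto k-1$ in the series defining $h(u+1,\lambda w)$: the factor $\lambda^{\ell}$ coming from $(\lambda w)^{\ell}$ combines with $\lambda^{-1}$ to yield $\lambda^{\ell-1}$, which is absorbed by re-indexing, leaving only the missing $k=0$ term $-\sum_{\ell\ge 2}b_\ell w^{\ell}/u = -g(w)/u$.

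\textbf{Step 4: Apply $\Phi^{-1}$.} Write $G(u,w) = (u_2,v_2)$, so that $\Phi(u_2,v_2) = \tilde F(\Phi(u,w))$. From Step 2 the first coordinate is immediately $u_2 = u + f(w) + O(1/u)$. For the second, I would plug in the ansatz $v_2 = \lambda w + E(u,w)$ and show $E = O(1/u^2)$. The defining equation becomes
\[
E + h(u_2,\lambda w + E) = \lambda h(u,w) - \tfrac{g(w)}{u} + O\!\left(\tfrac{1}{u^2}\right).
\]
Using the Lipschitz bound on $h$ in the second variable ($h$ itself is $O(1/u)$, so $h(u_2,\lambda w + E) - h(u_2,\lambda w) = O(E/u) = O(1/u^3)$), and the bound $\partial_u h = O(1/u^2)$ combined with $u_2 - (u+1) = f(w) - 1 + O(1/u) = O(1)$, I obtain
\[
h(u_2, \lambda w + E) = h(u+1,\lambda w) + O\!\left(\tfrac{1}{u^2}\right).
\]
Substituting and invoking the identity of Step 3 collapses the right side to $O(1/u^2)$, giving $E = O(1/u^2)$ as desired.

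\textbf{Main obstacle.} There is no deep difficulty — the map $\Phi$ is tailored to this cancellation — but the bookkeeping must be done carefully: one needs the $O(1/u^2)$ bound on $\partial_u h$ (a second application of Lemma~\ref{lemma:sequence}, obtained by noticing that $(u+k)^{-2}$ is summable with the same telescoping argument), and one needs the estimates to be uniform on $U_{R,\delta}$ so that the errors remain $O(1/u^2)$ throughout the sector, not only on compacta. Both uniformities follow from Lemma~\ref{lemma:sequence} and the continuous dependence of $R(\delta)$ in Lemma~\ref{Phi}.
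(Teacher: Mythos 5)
Your proof is correct and follows essentially the same route as the paper: the cohomological identity $h(u+1,\lambda w)-\lambda h(u,w)=-g(w)/u$ that you isolate in Step 3 is exactly the cancellation the paper performs by re-indexing the series after composing $\Phi^{-1}\circ\tilde F\circ\Phi$, and both arguments rest on the same Abel-summation bound for the second-order sums $\sum_k \lambda^{nk}/(u+k)^2$ (the trivial absolute bound would only give $O(1/u)$ there). The one cosmetic slip is in Step 4, where writing $O(E/u)=O(1/u^3)$ presupposes the conclusion; the a priori bound $E=O(1/u)$, immediate from the defining equation, already makes that error term $O(1/u^2)$, which is all that is needed.
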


\begin{proof} Let us write $(u_1,w_1):=G(u,w)$. First observe that
$$(\tilde{F}\circ\Phi)(u,w)=\left(u+f(w)+O\left(\frac{1}{u}\right),\lambda w+\sum_{\ell=2}^{\infty}b_{\ell}w^{\ell}\sum_{k=0}^{\infty}\frac{\lambda^{(\ell-1)k}}{u+k}-\frac{1}{u}\sum_{\ell=2}^{\infty}b_{\ell}w^{\ell} +O\left(\frac{1}{u^2}\right)  \right).$$
Since
$$\Phi^{-1}(u,w)=\left(u,w-\lambda^{-1}\sum_{\ell=2}^{\infty}b_{\ell}w^{\ell}\sum_{k=0}^{\infty}\frac{\lambda^{(\ell-1)k}}{u+k}+O\left(\frac{1}{u^2}\right) \right),$$
it follows that
\begin{align*}
u_1&=u+f(w)+O\left(\frac{1}{u}\right)\\
w_1&=\lambda w+\sum_{\ell=2}^{\infty}b_{\ell}w^{\ell}\sum_{k=0}^{\infty}\frac{\lambda^{(\ell-1)k}}{u+k}-\frac{1}{u}\sum_{\ell=2}^{\infty}b_{\ell}w^{\ell}-\lambda^{-1}\sum_{\ell=2}^{\infty}b_{\ell}\lambda^{\ell}w^{\ell}\sum_{k=0}^{\infty}\frac{\lambda^{(\ell-1)k}}{u+k+f(w)} \\
&= \lambda w+\sum_{\ell=2}^{\infty}b_{\ell}w^{\ell}\left(-\frac{1}{u}+\sum_{k=0}^{\infty}\frac{\lambda^{(\ell-1)k}}{u+k}-\sum_{k=1}^{\infty}\frac{\lambda^{(\ell-1)k}}{u+k+(f(w)-1)+O\left(\frac{1}{u}\right)}\right)+O\left(\frac{1}{u^2}\right)\\
&=\lambda w+\sum_{\ell=2}^{\infty}b_{\ell}w^{\ell}
\sum_{k=1}^{\infty}\frac{\lambda^{(\ell-1)k}(f(w)-1)+O\left(\frac{1}{u}\right)}{(u+k)\left(u+k+(f(w)-1)+O\left(\frac{1}{u}\right)\right)}+O\left(\frac{1}{u^2}\right)\\
&=\lambda w+(f(w)-1)\sum_{\ell=2}^{\infty}b_{\ell}w^{\ell}
\sum_{k=1}^{\infty}\frac{\lambda^{(\ell-1)k}}{(u+k)\left(u+k+(f(w)-1)+O\left(\frac{1}{u}\right)\right)}+O\left(\frac{1}{u^2}\right)\\
&=\lambda w +O\left(\frac{1}{u^2}\right).
\end{align*}
The last equality follows from the fact that
$$\left|\sum_{k=1}^{\infty}\frac{\lambda^{(\ell-1)k}}{(u+k)\left(u+k+(f(w)-1)+O\left(\frac{1}{u}\right)\right)}\right|<\frac{C(\ell-1)^r}{|u^2|}$$
on $U_{R,\delta}$ for some $C>0$, which follows similarly as Lemma \ref{lemma:sequence}.
\end{proof}

Recall that $f(z)=1+\sum_{k=1}^{\infty}d_kw^k$ and let
 $$
\Psi(u,w)=\left(u+\sum_{k=1}^{\infty}\frac{d_k}{\lambda^k-1}w^k,w\right)
$$
 be the map defined in Lemma \ref{psi}. \textcolor{black}{Observe that $\left|\sum_{k=1}^{\infty}\frac{d_k}{\lambda^k-1}w^k\right|\leq \gamma(\delta)$ for all for all $|w|\leq\delta$. Therefore,
by Lemma \ref{psi}, there exists $R'>R$ such that $\Psi(U_{R',\delta/2})\subset U_{R, \delta}$. This implies that
\[
\tilde{G}:=\Psi^{-1}\circ G\circ \Psi
\]
is a well defined univalent map on $U_{R',\delta/2}$.}

In order to avoid burdening notations, we still denote $R'$ by $R$ \textcolor{black}{and $\frac{\delta}{2}$ by $\delta$}, so that $\tilde G$ is univalent on $U_{R,\delta}$.

\textcolor{black}{\begin{lemma}Given $\delta>0$, we can choose $R$ sufficiently large so  that for $(u,w)\in U_{R,\delta}$ we have
\begin{equation}\label{tildeGn}
\tilde G(u,w)=\left(u+1+O\left(\frac{1}{u}\right), \lambda w+O\left(\frac{1}{u^2}\right)\right).
\end{equation}
\end{lemma}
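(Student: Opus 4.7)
The plan is to compute $\tilde G = \Psi^{-1}\circ G\circ\Psi$ directly using the explicit form of $\Psi$ and the formula for $G$ from Lemma~\ref{G-form}, exploiting the algebraic identity that motivated the definition of $\Psi$ in the first place.

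Let me write $h(w):=\sum_{k=1}^\infty \frac{d_k}{\lambda^k-1}w^k$, so that $\Psi(u,w)=(u+h(w),w)$ and $\Psi^{-1}(u,w)=(u-h(w),w)$. The key algebraic observation is that
\[
h(\lambda w)-h(w) = \sum_{k=1}^\infty \frac{d_k(\lambda^k-1)}{\lambda^k-1}w^k = \sum_{k=1}^\infty d_k w^k = f(w)-1,
\]
so $h(w)-h(\lambda w)=1-f(w)$. This is exactly the cohomological equation that $\Psi$ is designed to solve, and it is what will collapse the $f(w)$ term to the constant $1$.

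Now I would carry out the composition. First,
\[
G\circ\Psi(u,w) = G(u+h(w),w) = \Bigl(u+h(w)+f(w)+O(1/u),\,\lambda w+O(1/u^2)\Bigr),
\]
where I use that $|h(w)|\le\gamma(\delta)$ is bounded on $|w|\le\delta$, so $1/(u+h(w))=O(1/u)$ and $1/(u+h(w))^2=O(1/u^2)$ uniformly on $U_{R,\delta}$ for $R$ large. Applying $\Psi^{-1}$ then gives
\[
\tilde G(u,w) = \Bigl(u+h(w)+f(w)+O(1/u) - h\bigl(\lambda w+O(1/u^2)\bigr),\,\lambda w+O(1/u^2)\Bigr).
\]
Since $h$ is entire (and hence Lipschitz on any fixed disc of radius $2\delta$, say), $h(\lambda w+O(1/u^2))=h(\lambda w)+O(1/u^2)$ uniformly on $U_{R,\delta}$. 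Substituting and using $h(w)-h(\lambda w)=1-f(w)$, the first coordinate becomes
\[
u+(1-f(w))+f(w)+O(1/u) = u+1+O(1/u),
\]
while the second coordinate is already in the desired form $\lambda w+O(1/u^2)$. This yields \eqref{tildeGn}.

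I do not expect any serious obstacle: the proof is essentially a telescoping computation, and the estimates hold on $U_{R,\delta}$ because $h$ and $f$ are entire and bounded on the disc of radius $\delta$. The only thing to be careful about is checking that after the composition, all remainder terms can indeed be absorbed into the stated $O(1/u)$ and $O(1/u^2)$ bounds uniformly on $U_{R,\delta}$; this requires choosing $R$ large enough so that $|u+h(w)|\ge |u|/2$ throughout the domain, which is automatic once $R>2\gamma(\delta)$.
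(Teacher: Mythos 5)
Your proposal is correct and follows essentially the same route as the paper: conjugate $G$ by $\Psi$, use the cohomological identity $h(\lambda w)-h(w)=f(w)-1$ (which the paper carries out termwise as $\sum_k \frac{d_k}{\lambda^k-1}w^k+\sum_k d_kw^k=\sum_k\frac{d_k}{\lambda^k-1}(\lambda w)^k$) to collapse $f(w)$ to $1$, and absorb the remaining difference $h(\lambda w)-h(\lambda w+O(1/u^2))$ into $O(1/u^2)$ via the convergence of the small-divisor series. The only point to watch is the constant: on $K_{R,\delta}$ one has $|u|\geq R/2$, so guaranteeing $|u+h(w)|\geq|u|/2$ requires $R\geq 4\gamma(\delta)$ rather than $2\gamma(\delta)$, a harmless adjustment.
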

\begin{proof} Let us write $(u',w')=\tilde{G}(u,w)$. By  Lemma \ref{psi} the function $q(w):=\sum_{k=1}^{\infty}\frac{d_k}{\lambda^k-1}w^k$ is an entire function, therefore  we can choose $R$  so large that $|q(w)|<R$ for all $|w|\leq \delta$.
It follows that
\[
w'=\lambda w +O\left(\frac{1}{u^2}\right).
\]
In the first coordinate we get
\begin{equation*}
\begin{split}
u'&=u+\sum_{k=1}^{\infty}\frac{d_k}{\lambda^k-1}w^k +f(w)-\sum_{k=1}^{\infty}\frac{d_k}{\lambda^k-1}\left(\lambda w+O\left(\frac{1}{u^2}\right)\right)^k\\
&+O\left(\frac{1}{u+q(w)}\right)+O\left(\frac{1}{u^2}\right).
\end{split}
\end{equation*}
Next observe that $O\left(\frac{1}{u+q(w)}\right)=O\left(\frac{1}{u}\right)$. Hence,
\begin{align*}
u'&=u+\sum_{k=1}^{\infty}\frac{d_k}{\lambda^k-1}w^k +f(w)-\sum_{k=1}^{\infty}\frac{d_k}{\lambda^k-1}\left(\lambda w+O\left(\frac{1}{u^2}\right)\right)^k+O\left(\frac{1}{u}\right)\\
&=u+1+\sum_{k=1}^{\infty}\frac{d_k}{\lambda^k-1}w^k +\sum_{k=1}^{\infty}d_k w^k-\sum_{k=1}^{\infty}\frac{d_k}{\lambda^k-1}\left(\lambda w+O\left(\frac{1}{u^2}\right)\right)^k+O\left(\frac{1}{u}\right)\\
&=u+1+\sum_{k=1}^{\infty}\frac{d_k}{\lambda^k-1}\left(\lambda^kw^k -\left(\lambda w+O\left(\frac{1}{u^2}\right)\right)^k\right)+O\left(\frac{1}{u}\right)\\
&=u+1+O\left(\frac{1}{u}\right),
\end{align*}
where we used the fact that $\sum_{k=1}^{\infty}\frac{d_k}{\lambda^k-1}\left(\lambda^kw^k -\left(\lambda w+O\left(\frac{1}{u^2}\right)\right)^k\right)=O\left(\frac{1}{u^2}\right)$.
\end{proof}}

\begin{remark}\label{Rem:UtildeG}
It follows from \eqref{tildeGn} that for any $R''>0$ there exists $R'\geq R$ such that  $\tilde G(U_{R',\delta})\subset U_{R'',2\delta}$.
\end{remark}
\textcolor{black}{By previous lemma, we can write
$$\tilde G(u,w)=\left(u+1+\frac{h(w)}{u}, \lambda w\right)+O\left(\frac{1}{u^2}\right),$$
for some holomorphic function
$h:\{w\in\C:|w|\leq\delta\}\to\C$. Let us denote $A:=h(0)$ and let}
\begin{equation}\label{Eq:Tau}
\tau(u,w)=\left(u- \sum_{k=0}^{\infty}\frac{h(\lambda^kw)-A}{u+k},w\right).
\end{equation}
By Lemma \ref{tau}, there exists $R''>0$ such that $\tau^{-1}$ is well defined on $U_{R'',2\delta}$. By Remark~\ref{Rem:UtildeG} we can choose  $R'\geq R$ such that  $\tilde G(U_{R',\delta})\subset U_{R'',2\delta}$. Finally, we can choose  $R_1>0$ such that $\tau$ is well defined and univalent on \textcolor{black}{$U_{R_1,\delta/2}$ and $\tau(U_{R_1,\delta/2})\subset U_{R',\delta}$}.

Once again, in order to simplify notation, we will write $R$ instead of $R_1$ \textcolor{black}{and $\delta$ instead of $\frac{\delta}{2}$}, so that
\begin{equation}\label{Hj}
H(u,w):=\tau^{-1}\circ \tilde{G}\circ \tau
\end{equation}
is well defined and univalent on  $U_{R,\delta}$.

\begin{proposition}\label{h} For $(u,w)\in U_{R, \delta}$, we have $H(u,w)=\left(u+1+\frac{A}{u}+O\left(\frac{1}{u^2}\right),\lambda w +O\left(\frac{1}{u^2}\right)\right)$.
\end{proposition}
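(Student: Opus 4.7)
The plan is to track the composition $H = \tau^{-1}\circ\tilde G\circ\tau$ step by step. Denote $(u_1,w_1):=\tau(u_0,w_0)$, $(u_2,w_2):=\tilde G(u_1,w_1)$, and $(u_3,w_3):=\tau^{-1}(u_2,w_2)=H(u_0,w_0)$. Write $S(u,w):=\sum_{k=0}^{\infty}\frac{h(\lambda^k w)-A}{u+k}$ so that $\tau(u,w)=(u-S(u,w),w)$. The crucial algebraic identity, obtained by reindexing, is
\[
S(u+1,\lambda w)-S(u,w)=-\frac{h(w)-A}{u},
\]
and this is precisely why $\tau$ was designed the way it was: it will absorb the $w$-dependent $h(w)/u$ term of $\tilde G$ and convert it into the constant $A/u$.

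The second coordinate is easy: $\tau$ and $\tau^{-1}$ fix $w$, so $w_3=w_2=\lambda w_1+O(1/u_1^2)=\lambda w_0+O(1/u_0^2)$, using $u_1=u_0-S(u_0,w_0)=u_0+O(1/u_0)$. For the first coordinate, from $\tilde G(u,w)=(u+1+h(w)/u+O(1/u^2),\lambda w+O(1/u^2))$ and $1/u_1=1/u_0+O(1/u_0^3)$, I would first expand
\[
u_2=u_0-S(u_0,w_0)+1+\frac{h(w_0)}{u_0}+O\!\left(\tfrac{1}{u_0^2}\right).
\]
Then, from $\tau(u_3,w_3)=(u_2,w_2)$, one has $u_3=u_2+S(u_3,w_3)$. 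Since $u_3=u_0+1+O(1/u_0)$ and $w_3=\lambda w_0+O(1/u_0^2)$, a Taylor expansion of $S$ gives
\[
S(u_3,w_3)=S(u_0+1,\lambda w_0)+O\!\left(\tfrac{1}{u_0^2}\right),
\]
provided $\partial_u S=O(1/u^2)$ and $\partial_w S=O(1/u)$. Invoking the identity to replace $S(u_0+1,\lambda w_0)=S(u_0,w_0)-(h(w_0)-A)/u_0$ and adding up yields the desired cancellation $-S(u_0,w_0)+S(u_0,w_0)=0$ and $h(w_0)/u_0-h(w_0)/u_0+A/u_0=A/u_0$, so $u_3=u_0+1+A/u_0+O(1/u_0^2)$.

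The routine but essential technical step is establishing the two derivative bounds $\partial_u S=O(1/u^2)$ and $\partial_w S=O(1/u)$ uniformly on $U_{R,\delta}$. The bound on $\partial_w S$ comes directly from Lemma~\ref{lemma:sequence} applied to the series $\sum_k \lambda^{kj}/(u+k)$ obtained after expanding $h'(\lambda^k w)$ as a power series in $w$. The bound on $\partial_u S=-\sum_k(h(\lambda^k w)-A)/(u+k)^2$ follows from an Abel-summation argument essentially identical to that of Lemma~\ref{lemma:sequence}, using that the partial sums $\sum_{k=0}^{N}(h(\lambda^k w)-A)$ are bounded uniformly in $N$ by Lemma~\ref{lemma:lambda} (expanding $h(w)-A=\sum_{j\geq 1}c_j w^j$).

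The main obstacle, therefore, is purely bookkeeping: keeping the $O(1/u^2)$ error terms under control through three successive compositions while making sure every expansion is justified on the full non-compact sector $U_{R,\delta}$ rather than just near infinity. Once the derivative estimates on $S$ are in hand and the identity $S(u+1,\lambda w)-S(u,w)=-(h(w)-A)/u$ is applied at the right moment, everything collapses and the proposition follows.
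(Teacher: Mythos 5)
Your argument is correct and is essentially the paper's own proof in slightly different packaging: the explicit identity $S(u+1,\lambda w)-S(u,w)=-\frac{h(w)-A}{u}$ is exactly the index shift $k\mapsto k+1$ that the paper performs inside the composed series, and your derivative bounds on $S$ (via Lemma~\ref{lemma:lambda}, Lemma~\ref{lemma:sequence} and Abel summation) replace the paper's direct term-by-term estimation of the two residual sums. Note that for the Taylor step the crude bound $\partial_u S=O(1/u)$ already suffices, since it multiplies a displacement of size $O(1/u)$; otherwise the bookkeeping matches the paper's.
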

\begin{proof}
By Lemma \ref{tau},  $\tau(u,w)=(u+O\left(\frac{1}{u}\right),w)$. This, together with \eqref{tildeGn}, implies
$$\tilde{G} \circ \tau(u,w)=\left(u+1+\frac{h(w)}{u}-\sum_{k=0}^{\infty}\frac{h(\lambda^kw)-A}{u+k}+O\left(\frac{1}{u^2}\right),\lambda w+ O\left(\frac{1}{u^2}\right)\right).
$$
Next observe that
$$\tau^{-1}(u,w)=\left(u+ \sum_{k=0}^{\infty}\frac{h(\lambda^kw)-A}{u+k}+O\left(\frac{1}{u^2}\right),w\right).$$
Let us write $(u',w')=H(u,w)$ and observe that $w'=\lambda w+ O\left(\frac{1}{u^2}\right)$. Let us write $w'=\lambda w+\alpha(u)$ where $\alpha(u)=O\left(\frac{1}{u^2}\right)$ is a holomorphic function (with coefficients depending on $w$).  In the first coordinate we get
\begin{align*}
u'&= u+1+\frac{h(w)}{u}-\sum_{k=0}^{\infty}\frac{h(\lambda^kw)-A}{u+k}
+\sum_{k=0}^{\infty}\frac{h(\lambda^{k+1}w+\lambda^{k}\alpha(u))-A}{u+k+1+O\left(\frac{1}{u}\right)}+O\left(\frac{1}{u^2}\right)\\
&=u+1+\frac{A}{u}-\sum_{k=0}^{\infty}\frac{h(\lambda^kw)-A}{u+k}
+\sum_{k=1}^{\infty}\frac{h(\lambda^{k}w+\lambda^{k-1}\alpha(u))-A}{u+k+O\left(\frac{1}{u}\right)}+O\left(\frac{1}{u^2}\right)\\
&=u+1+\frac{A}{u}+ \sum_{k=1}^{\infty}\frac{h(\lambda^{k}w+\lambda^{k-1}\alpha(u))-h(\lambda^{k}w)}{u+k+O\left(\frac{1}{u}\right)}
\\&-\sum_{k=1}^{\infty}\frac{h(\lambda^{k}w)O\left(\frac{1}{u}\right)}{(u+k+O\left(\frac{1}{u}\right))(u+k)}+O\left(\frac{1}{u^2}\right).
\end{align*}
We are going to show that both of the infinite sums in the above expression are of order $O\left(\frac{1}{u^2}\right)$.

Clearly
\begin{equation*}
\begin{split}
&\left|\sum_{k=1}^{\infty}\frac{h(\lambda^{k}w)O\left(\frac{1}{u}\right)}{(u+k+O\left(\frac{1}{u}\right))(u+k)}\right|\\&\leq \sup_{|w|\leq\delta}|h(w)|\cdot O\left(\frac{1}{|u|}\right)\cdot\sum_{k=1}^{\infty}\frac{1}{|(u+k+O\left(\frac{1}{u}\right))(u+k)|}=O\left(\frac{1}{|u^2|}\right).
\end{split}
\end{equation*}

As before we can write convergent power series:
$$
h(w+\lambda^{-1}\alpha(u))-h(w)=\sum_{\ell=1}^\infty \sum_{j=0}^\infty \frac{(\ell+j)!\lambda^{-\ell}}{j!}b_{j+\ell}w^j (\alpha(u))^{\ell},
$$
where $h(w)=\sum_{j=0}^\infty b_{j}w^j$. Now observe that the same computation as in Lemma \ref{lemma:sequence} tells us that the sum
$$
\sum_{k=0}^{\infty}\frac{\lambda^{k(j+\ell)}}{u+k+O\left(\frac{1}{u}\right)}
$$
converges and that
$$\left|\sum_{k=0}^{\infty}\frac{\lambda^{k(j+\ell)}}{u+k+O\left(\frac{1}{u}\right)}\right|\leq \frac{C(j+\ell)^r}{|u|}.$$
 Therefore
\begin{align*}
\left|\sum_{k=0}^{\infty}h(\lambda^kw+\lambda^{k-1}\alpha(u))-h(\lambda^kw)\right|&= \left|\sum_{\ell=1}^\infty \sum_{j=0}^\infty \frac{(\ell+j)!\lambda^{-\ell}}{j!}b_{j+\ell}w^j (\alpha(u))^{\ell}\sum_{k=0}^{\infty}\frac{\lambda^{k(j+\ell)}}{u+k+O\left(\frac{1}{u}\right)}\right|\\
&=\frac{C}{|u|}\sum_{\ell=1}^\infty \sum_{j=0}^\infty \frac{(\ell+j)!(j+\ell)^r}{j!}\left|b_{j+\ell}w^j (\alpha(u))^{\ell}\right|\\
&=O\left(\frac{1}{u^2}\right).
\end{align*}
\end{proof}

For given $(u_0, w_0)\in U_{R,\delta}$ and integer $n\geq 1$ we write $(u_{n},w_{n}):=H^n(u_0,w_0)$.

\begin{lemma}\label{induction}Given $\delta>0$ there exists $T_\delta\geq R$ such that for every $T\geq T_\delta$ and $(u_0,w_0)\in U_{T, \frac{\delta}{2}}$, we have
\[(u_{n},w_{n})\in U_{T +\frac{n}{2},\delta}
\]
 for every $n\geq 1$. Moreover given $0<\varepsilon\leq\frac{\delta}{2}$, there exists  $T_\varepsilon \geq T_\delta$ such that
\[
 |w_{n}-\lambda^{n} w_0|<\varepsilon
 \]
for every $(u_0,w_0)\in U_{T_\varepsilon, \delta/2}$ and for every $n\geq 1$.
\end{lemma}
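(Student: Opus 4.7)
The plan is to prove both claims simultaneously by induction on $n \geq 0$ with the inductive hypothesis $(u_n, w_n) \in U_{T + n/2, \delta}$. By Proposition~\ref{h} we may write
\[
H(u,w) = \bigl(u + 1 + \xi_1(u,w),\; \lambda w + \xi_2(u,w)\bigr),
\]
where $|\xi_1(u,w)| \leq C_1/|u|$ and $|\xi_2(u,w)| \leq C_2/|u|^2$ on $U_{R,\delta}$, for suitable constants $C_1, C_2 > 0$. The key geometric observation is that the sector condition $\arg(u - R) \in [-3\pi/4, 3\pi/4]$ defining $K_{R,\delta}$ is equivalent to $\mathrm{Re}(u) + |\mathrm{Im}(u)| \geq R$; in particular, for $R > 0$ large compared to $\gamma(\delta)$, every $u \in K_{R,\delta}$ satisfies $|u| \geq R/\sqrt{2}$.

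For the sector part of the inductive step, assume $(u_n, w_n) \in U_{T + n/2, \delta}$ and choose $T_\delta$ large enough so that $|\xi_1(u_n, w_n)| \leq C_1\sqrt{2}/(T + n/2) \leq 1/4$ whenever $T \geq T_\delta$. Setting $\eta_n := \xi_1(u_n, w_n)$, a direct estimate using $|\mathrm{Re}(\eta_n)|, |\mathrm{Im}(\eta_n)| \leq |\eta_n|$ gives
\[
\mathrm{Re}(u_{n+1}) + |\mathrm{Im}(u_{n+1})| \geq \mathrm{Re}(u_n) + |\mathrm{Im}(u_n)| + 1 - 2|\eta_n| \geq T + n/2 + 1/2 = T + (n+1)/2,
\]
while $\mathrm{Re}(u_{n+1}) \geq \mathrm{Re}(u_n) + 3/4 > -\gamma(\delta)$. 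Thus $u_{n+1} \in K_{T + (n+1)/2, \delta}$.

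For the second coordinate, iterating $w_{n+1} = \lambda w_n + \xi_2(u_n, w_n)$ gives $w_n = \lambda^n w_0 + \sum_{k=0}^{n-1} \lambda^{n-1-k} \xi_2(u_k, w_k)$, whence
\[
|w_n - \lambda^n w_0| \leq \sum_{k=0}^{n-1} \frac{C_2}{|u_k|^2} \leq 2 C_2 \sum_{k=0}^{\infty} \frac{1}{(T + k/2)^2} \leq \frac{C_3}{T}
\]
for a suitable $C_3 > 0$. Strengthening the choice of $T_\delta$ to also satisfy $C_3/T_\delta \leq \delta/2$ yields $|w_n| \leq |w_0| + \delta/2 \leq \delta$, closing the induction for the first claim. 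The second claim then follows by taking $T_\varepsilon := \max(T_\delta, C_3/\varepsilon)$, since then $|w_n - \lambda^n w_0| \leq C_3/T_\varepsilon \leq \varepsilon$.

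The main obstacle is the propagation of the sector condition under the $O(1/u)$-perturbation $\eta_n$; the cleanest way to handle it is via the equivalent formulation $\mathrm{Re}(u) + |\mathrm{Im}(u)| \geq R$, which is stable under adding a vector of the form $1 + \eta$ with $|\eta| < 1/4$, thereby producing exactly the ``$+1/2$'' apex shift required at each step. The geometric lower bound $|u_k| \geq (T + k/2)/\sqrt{2}$ is then what makes $\sum 1/|u_k|^2$ summable and controls the drift of $w_n$ away from $\lambda^n w_0$.
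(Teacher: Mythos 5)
Your proof is correct and follows essentially the same route as the paper's (which is a two-line sketch: choose $T$ so that $|u_1-u_0-1|<\tfrac12$ and $|w_1-\lambda w_0|<C/|u_0|^2$ on $U_{T,\delta}$, then induct). In fact your reformulation of the sector condition as $\mathrm{Re}(u)+|\mathrm{Im}(u)|\geq R$ supplies exactly the detail the paper glosses over --- its claimed invariant $\mathrm{Re}(u_n)>T+\tfrac{n}{2}$ need not hold for starting points near the edge of the sector, whereas your ``apex shift'' invariant does, and it still yields the lower bound $|u_k|\geq (T+k/2)/\sqrt{2}$ needed for the summability of the $w$-drift.
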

\begin{proof} Fix $0<\varepsilon\leq \delta/2$. By Proposition \ref{h} we can choose $T\geq R$ and $C>0$ such that $|u_{1}-u_0-1|<\frac{1}{2}$ and $|w_{1}-\lambda w_0|<\frac{C}{|u_0^2|}$ on $U_{T,\delta}$ and such that
 $\sum_{n=0}^{\infty}\frac{C}{(T+\frac{n}{2})^2}<\varepsilon$. Using induction it is easy to see  that ${\sf Re}(u_{n})>T+\frac{n}{2}$ and $|w_{n}-\lambda^nw_0|<\varepsilon$.
\end{proof}

\begin{lemma}\label{estimate2} Let $\delta>0$ and $T\geq T_\delta$ be as in  Lemma \ref{induction}. For every compact subset $K\subset U_{T, \frac{\delta}{2}}$  there exists a constant $C>0$ so that for every  $(u_0,w_0)\in K$ and every $n\geq 1$ we have
$$ \frac{1}{|u_{n}|}\leq \frac{C_1}{n} \quad \text{and}\quad \left|\frac{1}{u_{n}}-\frac{1}{n}\right|\leq C\frac{\log n}{n^2}.$$
\end{lemma}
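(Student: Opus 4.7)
The plan is to first extract a linear lower bound on $|u_n|$ from Lemma~\ref{induction}, and then feed it back into the telescoped recursion coming from Proposition~\ref{h} in order to locate $u_n$ within $O(\log n)$ of $n$, from which the refined estimate follows by a direct computation.

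\textbf{Step 1 (the bound $1/|u_n|\leq C_1/n$).} Proposition~\ref{h} gives, after shrinking $T$ if necessary, that $|u_{k+1}-u_k-1|<1/2$ on $U_{T,\delta}$, so as in the proof of Lemma~\ref{induction} we obtain $\mathrm{Re}(u_n)>\mathrm{Re}(u_0)+n/2>-\gamma(\delta/2)+n/2$ for every $n\geq 1$. Consequently there is $N_0=N_0(\delta)$ such that $|u_n|\geq\mathrm{Re}(u_n)\geq n/4$ for all $n\geq N_0$. For $n<N_0$, the compactness of $K$ and continuity of each iterate $H^{j}$ ($1\leq j\leq N_0$), combined with the fact that $0\notin K_{T+j/2,\delta}$ (since the sector condition $\arg(-T-j/2)\in[-3\pi/4,3\pi/4]$ fails whenever $T+j/2>0$), give a uniform lower bound $|u_n|\geq m>0$ on $K$. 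Choosing $C_1$ large enough then yields $1/|u_n|\leq C_1/n$ for every $n\geq 1$.

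\textbf{Step 2 (location of $u_n$).} By Proposition~\ref{h} there is a constant $M>0$ such that
$$u_{n+1}=u_n+1+\frac{A}{u_n}+\rho_n,\qquad |\rho_n|\leq\frac{M}{|u_n|^2},$$
uniformly in $(u_0,w_0)\in K$. Telescoping gives
$$u_n=u_0+n+A\sum_{k=0}^{n-1}\frac{1}{u_k}+\sum_{k=0}^{n-1}\rho_k.$$
By Step~1, $|\rho_k|\leq M C_1^2/k^2$ for $k\geq 1$, so $\sum_{k=0}^{n-1}\rho_k=O(1)$; similarly
$$\left|\sum_{k=0}^{n-1}\frac{1}{u_k}\right|\leq\frac{1}{|u_0|}+C_1\sum_{k=1}^{n-1}\frac{1}{k}\leq C'\log n$$
for $n\geq 2$ and some $C'>0$ depending only on $K$. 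Hence $|u_n-n|\leq|u_0|+|A|C'\log n+O(1)=O(\log n)$.

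\textbf{Step 3 (conclusion).} Combining the two previous steps, for $n\geq 2$
$$\left|\frac{1}{u_n}-\frac{1}{n}\right|=\frac{|u_n-n|}{n\,|u_n|}\leq\frac{O(\log n)}{n\cdot(n/C_1)}=O\!\left(\frac{\log n}{n^2}\right),$$
and for $n=1$ the left-hand side is bounded on the compact $K$, so the inequality holds after enlarging $C$ if needed.

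The one place requiring care is the bookkeeping of the error terms: we must ensure that the nonlinear remainder $\sum\rho_k$ is only $O(1)$ while the leading piece $A\sum 1/u_k$ is allowed to contribute the full logarithmic term. Both facts rest on the sharp bound $|u_n|\gtrsim n$ from Step~1, which itself follows cleanly from Lemma~\ref{induction} once compactness rules out an accidental zero of $u_n$ at small indices.
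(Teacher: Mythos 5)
Your proof is correct and follows essentially the same route as the paper: the first bound comes from $\mathrm{Re}(u_n)\gtrsim n$ via Lemma~\ref{induction}, and the second from telescoping $u_{n+1}=u_n+1+A/u_n+O(1/u_n^2)$ to get $|u_n-n|=O(\log n)$ and dividing by $n|u_n|$. Your Step~1 is in fact slightly more careful than the paper's (which simply asserts the bound from $\mathrm{Re}(u_n)>T+n/2$), but the argument is the same.
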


\begin{proof} The first inequality follows directly from  ${\sf Re}(u_{n})>T+\frac{n}{2}$. As for the second inequality first observe that
\begin{equation}\label{ineq1}
\left|\frac{1}{u_{n}}-\frac{1}{n}\right|\leq \frac{C_1|u_{n}-n|}{n^2}.
\end{equation}

Recall that $(u',w'):=H(u,w)=(u+1+\frac{A}{u},\lambda w) +O(1/u^2)$ where $\left|u'-u+1+\frac{A}{u}\right|\leq\frac{C'}{|u|^2}$ for some $C'>0$. Using the  inequality \eqref{ineq1} we can now deduce that
\begin{align*}
|u_{n}-n|&\leq\left|u_0+n+A\left(\frac{1}{u_0}+\ldots+\frac{1}{u_{n-1}}\right)-n\right|+C'C_1\sum_{k=1}^{n}\frac{1}{k^2}\\
&\leq|u_0|+|A|C_1\sum_{k=1}^{n}\frac{1}{k}+C'C_1\sum_{k=1}^{n}\frac{1}{k^2}\\
&\leq |u_0|+  C_2 \log n + C_3
\end{align*}
where $C_2,C_3>0$. Finally we obtain
$$\left|\frac{1}{u_{n}}-\frac{1}{n}\right|\leq \frac{C_1(|u_0|+  C_2 \log n + C_3)}{n^2}=O \left(\frac{\log n}{n^2}\right).$$

\end{proof}

\begin{proposition}\label{prop1}Let $F$ be an automorphism of the form \eqref{form}. Then $F$  has an invariant non-recurrent Fatou component $\Omega$ with $\omega$-limit set
$\{0\}\times \C \subset \partial\Omega$.
\end{proposition}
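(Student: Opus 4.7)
The plan is to use the conjugation $F\circ\Xi=\Xi\circ H$, where $\Xi:=\Theta\circ\Phi\circ\Psi\circ\tau$ is the composition of the biholomorphisms constructed in Section~\ref{prelim} and $H$ has the normal form of Proposition~\ref{h}. For $\delta>0$ small and $T\geq T_\delta$ as in Lemma~\ref{induction}, I would set $V_{T,\delta}:=\Xi(U_{T,\delta/2})$, a connected open subset of $\C^2\setminus(\{0\}\times\C)$. Lemma~\ref{induction} gives $|u_n|\to\infty$ and $|w_n|<\delta$ along every $H$-orbit starting in $U_{T,\delta/2}$; translating through $\Xi(u,w)=(-1/u+O(1/u^2),\,w+O(1/u))$, the iterates $F^n$ on $V_{T,\delta}$ have first coordinate tending uniformly to $0$ on compacta and second coordinate uniformly bounded. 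Hence $\{F^n\}$ is a normal family on $V_{T,\delta}$, and I let $\Omega$ be the Fatou component containing $V_{T,\delta}$. Since any two such sets $V_{T,\delta}$ and $V_{T',\delta'}$ share open subsets consisting of points with $|w|$ very small and ${\sf Re}(u)$ very large, the same $\Omega$ contains $\bigcup_{T,\delta}V_{T,\delta}$.

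For $F$-invariance, Lemma~\ref{induction} applied at $n=1$ gives $H(U_{T,\delta/2})\subset U_{T+1/2,\delta}\subset U_{T,\delta}$, so $F(V_{T,\delta})\subset\Xi(U_{T,\delta})$, a connected open subset of the Fatou set that contains $V_{T,\delta}$. Therefore $F(V_{T,\delta})\subset\Omega$, and since $F$ is an automorphism and $F(\Omega)$ is itself a Fatou component meeting $\Omega$, we conclude $F(\Omega)=\Omega$.

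To pin down the $\omega$-limit set, note that for $p=\Xi(u_0,w_0)\in V_{T,\delta}$ the bound $|w_n-\lambda^n w_0|<\varepsilon$ from Lemma~\ref{induction} (with $\varepsilon$ made arbitrarily small by enlarging $T$), combined with density of $\{\lambda^n w_0\}$ on the circle $\{|w|=|w_0|\}$, shows that the $\omega$-limit set of $F$ on $\Omega$ is dense in, and hence has closure equal to, $\{0\}\times\C$ when $w_0$ and $\delta$ are varied. Conversely, by normality every subsequential limit $h=\lim F^{n_k}$ on $\Omega$ satisfies $h_1\equiv 0$ on $V_{T,\delta}$; the identity theorem on the connected open set $\Omega$ then forces $h_1\equiv 0$ on $\Omega$, so all $\omega$-limit points lie in $\{0\}\times\C$.

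The main obstacle is showing $\{0\}\times\C\cap\Omega=\varnothing$, which yields $\{0\}\times\C\subset\partial\Omega$ and, combined with $h_1\equiv 0$ on $\Omega$, implies non-recurrence (any compact $K\subset\Omega$ is contained in $\{|z|\geq\varepsilon\}$ for some $\varepsilon>0$, which $F^n(p)$ must leave eventually). I would argue by contradiction: if $(0,w^*)\in\Omega$, choose a neighborhood $W\subset\Omega$ of $(0,w^*)$ and extract, by normality, a subsequence $F^{n_k}\to h$ converging uniformly on compacta of $W$. Taking $\delta>2|w^*|$ gives $W\cap V_{T,\delta}\neq\varnothing$, so $h_1\equiv 0$ on $W\cap V_{T,\delta}$ and hence on $W$ by the identity theorem; in particular $(\partial h_1/\partial z)(0,w^*)=0$. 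On the other hand, from the explicit form~\eqref{form} one checks $(\partial F_1/\partial z)(0,w)=1$ and $(\partial F_1/\partial w)(0,w)=0$, so an easy induction along the orbit $F^n(0,w^*)=(0,\lambda^n w^*)$ yields $(\partial (F^n)_1/\partial z)(0,w^*)=1$ for every $n$. Uniform convergence of the holomorphic maps $F^{n_k}\to h$ forces $(\partial h_1/\partial z)(0,w^*)=1$, a contradiction.
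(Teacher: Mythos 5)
Your construction of $\Omega$ (conjugating to $H$ via $\Theta\circ\Phi\circ\Psi\circ\tau$, using Lemma~\ref{induction} to get local boundedness and hence normality on the sector-times-disc sets, and taking the component containing their union) is the same as the paper's, and your invariance and non-recurrence arguments are sound. Where you genuinely diverge is the crucial step that $\{0\}\times\C$ is disjoint from $\Omega$. The paper computes
$\frac{\partial^2 \pi_1(F^{\circ n})}{\partial z^2}(0,w)=2\sum_{k=0}^{n-1}f(\lambda^k w)$, shows via the Diophantine condition that $\sum_{k=0}^{n-1}(f(\lambda^kw)-1)$ stays bounded, and concludes that the second derivative blows up like $2n$ — so $(0,w)$ lies outside the \emph{entire} Fatou set. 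Your argument is first-order: $dF_{(0,w)}$ is lower triangular with $(1,1)$-entry $1$, so $\partial_z\pi_1(F^{\circ n})(0,w^*)=1$ for all $n$, which is incompatible (via Weierstrass convergence of derivatives and the identity theorem) with any limit map on $\Omega$ having vanishing first coordinate. This is simpler — it needs no cancellation estimate and no Diophantine input at this point — but it proves strictly less: it only excludes $(0,w^*)$ from the particular component $\Omega$ (it says nothing about $(0,w^*)$ possibly lying in some other Fatou component, where limit maps need not kill the first coordinate), whereas the paper's second-derivative computation excludes $\{0\}\times\C$ from the Fatou set altogether. For the proposition as stated your weaker conclusion suffices, since $\{0\}\times\C\subset\overline{\Omega}$ together with $\{0\}\times\C\cap\Omega=\emptyset$ already gives $\{0\}\times\C\subset\partial\Omega$.

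One smaller point: your treatment of the $\omega$-limit set only yields that it is contained in $\{0\}\times\C$ and dense there (the $\omega$-limit set of $F$ on $\Omega$, being a union over orbits, need not be closed). The paper's Step~2 uses a Rouch\'e argument on the second coordinate of the limit maps to show that every limit map actually covers discs $\D_{\delta/16}$, hence has image exactly $\{0\}\times\C$; this surjectivity is what Theorem~\ref{main} asserts and is not recovered by the density-of-$\{\lambda^n w_0\}$ argument alone. If you intend the $\omega$-limit set to equal $\{0\}\times\C$ rather than merely have that closure, you should add such an open-mapping/Rouch\'e step.
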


\begin{proof}  Let $\delta>0$ and $R>0$. Let $\Theta$ be the map defined in \eqref{Theta}. Let  $V_{R,\delta}:=\Theta(U_{R,\delta})$ and observe that
\textcolor{black}{$V_{R,\delta}=\Lambda_{R,\delta}\times\D_{\delta}$, where $\Lambda_{R,\delta}:=\{z\in\C\mid -\frac{1}{z}\in K_{R,\delta}\}$ and $\mathbb{D}_{\delta}:=\{w\in \C: |w|<\delta\}$.}

\smallskip

\textcolor{black}{We divide the proof in four steps. First, we prove that for every $\delta>0$ one can find a suitable $R(\delta)>0$ so that $\{F^{\circ n}\}$ is a normal family on $V_{R(\delta),\delta/4}$. Next, we prove that the image of every limit map of $\{F^{\circ n}\}$ on such a set is contained in the affine line $\{z=0\}$ and always contains a disc of radius $\delta/16$. Then, we show that every limit map of the Fatou component $\Omega$ of $F$ which contains $V_{R(\delta),\delta/4}$ (for $\delta\to +\infty$), has image $\{0\}\times \C$. Finally, we show that $\Omega$ is non-recurrent. }
\smallskip

\noindent{\sl Step 1: For every $\delta>0$ there exists $R(\delta)>0$ such that $\{F^{\circ n}\}$ is a normal family on $V_{R(\delta),\delta/4}$}.

 Let $\delta>0$. Let $\tau$ be the map defined in \eqref{Eq:Tau} and let $\Phi$ be the one defined in Lemma~\ref{Phi}.

Let $M_\delta>0$ be given by Lemma \ref{psi}. By Lemma \ref{Phi}, Lemma \ref{psi} and Lemma \ref{tau},  for any $T\geq M_\delta$ there exists $R(\delta, T)>0$ such that
 $$\overline{\tau^{-1}\circ \Psi^{-1}\circ\Phi^{-1}\circ\Theta^{-1}(V_{R(\delta,T), \delta/4})}\subset\textcolor{black}{\Theta^{-1}(V_{2T, 2\delta})=  U_{2T,2\delta}}$$
 and
 \textcolor{black}{
\[
\Phi\circ\Psi\circ\tau(U_{2T,4\delta})\subset U_{T,32\delta},
\]}
 for every $n\geq 1$.

Therefore, if \textcolor{black}{  $T_{2\delta}>0$ is given by Lemma \ref{induction}, take $T:=\max\{M_{2\delta},T_{2\delta}\}$}  and $R(\delta):=R(\delta ,T)$. Since
\begin{equation}\label{EqFn}
F^{\circ n}=\Theta\circ\Phi\circ\Psi\circ\tau\circ H^n\circ \tau^{-1}\circ \Psi^{-1}\circ\Phi^{-1}\circ\Theta^{-1},
\end{equation}
  Lemma \ref{induction} implies at once that the family  $\{F^{\circ n}\}$ is normal on $V_{R(\delta), \delta/4}$.

  \smallskip

  \noindent{\sl Step 2: Let $g$ be a limit map of $\{F^{\circ n}\}$  on $V_{R(\delta), \delta/4}$. Then
 \[
  \{0\}\times\mathbb{D}_{\frac{\delta}{16}}\subset g(V_{R(\delta), \delta/4})\subset \{0\}\times \C.
  \]}

Let $\varepsilon=\delta/16$. Let $T_{\varepsilon}$ be as in Lemma  \ref{induction}, and let $R \geq \max\{M_\delta,T_\varepsilon\}$. Denote by $\pi_j:\C^2 \to \C$, $j=1,2$ the projection on the $j$-th component, that is, $\pi_1(z,w)=z$, $\pi_2(z,w)=w$. It follows from Lemma  \ref{induction} that  for all $(z,w)\in V_{R,\delta/4}$ and $n\geq 1$:
\[
|\pi_2\circ \Psi\circ\tau\circ H^n\circ \tau^{-1}\circ \Psi^{-1}\circ\Phi^{-1}\circ\Theta^{-1}(z,w)-\lambda \pi_2\circ \Phi^{-1}\circ \Theta^{-1}(z,w)|<\frac{\delta}{16}.
\]
Taking into account that, by Lemma \ref{Phi}, $\pi_2\circ \Phi(u,w)$ and $\pi_2\circ \Phi^{-1}(u,w)$  are of the form $w+O(1/u)$, the previous equation and \eqref{EqFn} imply that there exist $R'\geq R$ such that
\begin{equation}\label{stima2F}
|\pi_2 \circ F^{\circ n}(z,w)-\lambda^n w|< \frac{\delta}{10}
\end{equation}
for all $(z,w)\in V_{R',\delta/4}$ and $n\geq 1$.

Moreover, again by Lemma \ref{induction}, given $\eta>0$, there exists $n_0$ such that for all $n\geq n_0$,\begin{equation}\label{stima1F}
|\pi_1 \circ F^{\circ n}(z,w)|\leq \eta
\end{equation}
for all $(z,w)\in V_{R',\delta/4}$.

Let $\{n_j\}$ be any increasing sequence for which $F^{\circ n_j}$ converges uniformly on compacta of $V_{R(\delta), \delta/4}$ to a holomorphic function $g$ and  $\lambda^{n_j}$ converges to some $\mu\in \partial \D$.  It follows from  \eqref{stima1F} that  $g(z,w)=(0, g_2(z,w))$ for all $(z,w)\in V_{R(\delta), \delta/4}$, where $g_2:V_{R(\delta), \delta/4}\to \C$ is holomorphic. Moreover, by \eqref{stima2F},
\begin{equation}\label{Eqg2}
|g_2(z,w)-\mu w|<\frac{\delta}{10}
\end{equation}
for all $(z,w)\in V_{R',\delta/4}$.

Fix $w_0\in \C$, $|w_0|<\delta/16$. Let $z_0\in \C$ be such that  \textcolor{black}{$-1/z_0\in K_{R',\frac{\delta}{16}}$}. Hence,  $(z_0,w_0)\in V_{R',\delta/4}$. Moreover, $\{z_0\}\times \{w\in\C : |w-w_0|<\delta/8\}\subset V_{R',\delta/4}$ and \eqref{Eqg2} holds on $|w-w_0|=\delta/8$. Therefore, by Rouch\'e's theorem, $\overline{\mu}g_2(z_0,w)-\overline{\mu}w_0$ and $w-\overline{\mu}w_0$ have the same number of zeros in $\{w\in\C : |w-w_0|<\delta/8\}$. Since $|\overline{\mu}w_0-w_0|\leq 2 |w_0|<\delta/8$, it follows that there exists $w_1$, with $|w_1-w_0|<\delta/16$ such that $\overline{\mu}g_2(z_0,w_1)=w_0$. By the arbitrariness of $w_0$, it follows that $\D_{\delta/16}\subset g_2(V_{R(\delta), \delta/4})$, which completes step 2.

\smallskip

\noindent{\sl Step 3: There exists an invariant Fatou component $\Omega$ such that  the image of any limit map of $\{F^{\circ n}|_\Omega\}$ is $\{0\}\times \C$}.

Let $\{\delta_m\}$ be an increasing sequence of positive real numbers which converges to $+\infty$. We can choose $R(\delta_{m+1})\geq R(\delta_m)$ for all $m\geq 0$. Let $V_m:=V_{R(\delta_m), \delta_m/4}$, $m\geq 0$. Hence,
\textcolor{black}{
\[
\left\{(z,w)\in V_{m+1}: |w|< \frac{\delta_m}{4} \text{ and } {\sf Re}(-\frac{1}{z})>-\gamma\left(\frac{\delta_m}{4}\right) \right\}\subset V_m.
\] }
Therefore $V:=\cup_{m\geq 0} V_m$ is open and connected. Since $\{F^{\circ n}\}$ is a normal family on $V_m$ for all $m\geq 0$ by Step 1, the previous equation and a diagonal argument imply that  $\{F^{\circ n}\}$ is a normal family on $V$ and, hence,   $\{F^{\circ n}\}$ is normal on
\begin{equation}\label{Vset}
\mathcal{V}:=\bigcup_{n=0}^{\infty}F^{\circ n}(V).
\end{equation}

By \eqref{stima1F} and \eqref{stima2F}, $F^{\circ n}(V)\cap V\neq \emptyset$ for every $n\geq 1$.  Hence $\mathcal{V}$ is a $F$-forward invariant, open, connected set on which $\{F^{\circ n}\}$ is normal. Therefore, there exists an invariant Fatou component $\Omega$ which contains $\mathcal V$.

Now, let $g$ be a  limit map of $\{F^{\circ n}\}$ on $\Omega$. Hence, $g|_{V_m}$ is a limit map of $\{F^{\circ n}|_{V_m}\}$ for all $m\geq 0$. Then, it follows from  Step 2 that   $g(V)=\{0\}\times \C$. Since $V$ is open in $\mathcal V$, it follows as well that $g(\Omega)=\{0\}\times \C$.

\smallskip

\noindent{\sl Step 4:  $\Omega$ is non-recurrent}.

Observe that $F^{\circ n}(0,w)=(0,\lambda^n w)$ for all $w\in \C$. Equation \eqref{form} therefore implies that
$$
\frac{\partial^2 \pi_1(F^{\circ n})}{\partial z^2}(0,w)=2\sum_{k=0}^{n-1}f(\lambda^k w)=2\sum_{k=0}^{n-1}(f(\lambda^k w)-1)+2n.
$$

Let us first observe that $\sum_{k=0}^{n-1}(f(\lambda^k w)-1)$ is uniformly bounded in $w$ with respect to $n$. We have
\begin{align*}\left|\sum_{k=0}^{n-1}(f(\lambda^k w)-1)\right| &=\left|\sum_{k=0}^{n-1}\sum_{\ell=1}^{\infty}d_\ell\lambda^{k\ell} w^\ell \right|\\
&=\left|\sum_{\ell=1}^{\infty}d_\ell w^\ell\sum_{k=0}^{n-1}\lambda^{k\ell}  \right|\\
&\leq \sum_{\ell=1}^{\infty}|d_\ell| |w^\ell|\left|\sum_{k=0}^{n-1}\lambda^{k\ell}  \right|\\
&\leq C \sum_{\ell=1}^{\infty}|d_\ell|\ell^r |w^\ell|.
\end{align*}
Since $f(w)$ is an entire function, the last sum converges on uniformly on compacta of $\C$. This implies that
\[
\lim_{n\to\infty}\left| \frac{\partial^2 \pi_1(F^{\circ n})}{\partial z^2}(0,w) \right|=\infty.
\]
Therefore,   $(0,w)$ cannot be contain in any Fatou component of $F$ for all $w\in \C$. Thus $\{0\}\times \C\subset \partial \Omega$, which completes the proof.
\end{proof}

\begin{remark}\label{Vsupset}
The set $\mathcal V$ defined in \eqref{Vset} depends on the sequence $\{\delta_n\}$ and on the choice of $\{R(\delta_n)\}$. In particular, given any $\eta>0$ one can construct the open set $\mathcal V$ of the form \eqref{Vset} such that
\begin{equation}\label{Vsup}
\sup\{|z|: (z,w)\in \mathcal V\}<\eta.
\end{equation}
Indeed, unrolling the definition of $\mathcal V$, we see that
\[
\mathcal V=\cup_{n\geq 0}\cup_{m\geq 0}F^{\circ n}(V_m),
\]
where $V_m=V_{R(\delta_m), \delta_m/4}$, with $\{\delta_m\}$  an increasing sequence converging to $\infty$ and $R(\delta_m)$  a suitable increasing sequence of real positive numbers. By construction one can replace $R(\delta_m)$ with any $R_m\geq R(\delta_m)$ so that $\{R_m\}$ is still increasing. If we choose $R_m$ sufficiently large, by \eqref{stima1F}, $|\pi_1(F^{\circ n}(z,w))|<\eta$ for all $(z,w)\in F^{\circ n}(V_m)$ for all $n\geq 0$ and $m\geq 0$. Hence the corresponding set $\mathcal V$ satisfies \eqref{Vsup}.
\end{remark}

\section{Fatou coordinates}

Let $F$ be an automorphism of $\C^2$ of the form \eqref{form}. Let $\Omega$ be the invariant non-recurrent Fatou component of $F$ defined in  Proposition \ref{prop1}.

\textcolor{black}{
In this section we prove that there exists a global change of  holomorphic  coordinates on $\Omega$---the ``Fatou coordinates''---which makes $\Omega$ biholomorphic to $\C^2$, and so that $F$ has the form $(z,w)\mapsto (z+1,\lambda w)$ in this new coordinates. In order to make such a construction, we first show that $\Omega$ coincides with the set of points whose orbits pointwise accumulate  to $\{0\}\times \C$.  Then we define ``local approximating Fatou coordinates'' and show that they do converge to the  Fatou coordinates.}

Let $\{\delta_m\}$ be an increasing  sequence of positive real numbers converging to $\infty$ and let $\{R_m\}$ be an increasing sequence of positive real numbers such that $R_m\geq R(\delta_m)$ (where the $R(\delta_m)$'s are defined in Step 1 of the proof of Proposition \ref{prop1}). Let $V_m:=V_{R_m, \delta_m/4}$ and let
\[
\mathcal V:=\cup_{n\geq 0}\cup_{m\geq 0}F^{\circ n}(V_m).
\]
Let
$$
\mathcal{W^\iota}:=\bigcup_{n=0}^{\infty}(F^{-1})^{\circ n}(\mathcal V).
$$
Note that $\mathcal{W^\iota}$ is an open connected set such that $F(\mathcal{W^\iota})=\mathcal{W^\iota}$, and, since $\mathcal V\subset \Omega$, it follows that $\mathcal{W^\iota}\subseteq \Omega$.

\begin{lemma}\label{fatou}
Let $(z_0,w_0)\in \C^2\setminus(\{0\}\times \C)$ be such that \textcolor{black}{$\lim_{n\to \infty}\pi_1(F^{\circ n}(z_0,w_0))=0$ and $\{\pi_2(F^{\circ n}(z_0,w_0))\}$ is bounded, where, as before, $\pi_j$ is the projection on the $j$-th coordinate, $j=1,2$}.
Then for every set $\mathcal V$ as before, there exists $n_0=n_0(\mathcal V,z_0,w_0)$ such that $F^{\circ n}(z_0,w_0)\in \mathcal V$ for all $n\geq n_0$. In particular,  $\Omega=\mathcal{W}^{\iota}$.
\end{lemma}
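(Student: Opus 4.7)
The plan is to translate the problem via $\Theta(u,w)=(-1/u,w)$ into the coordinates in which $\tilde F:=\Theta^{-1}\circ F\circ\Theta$ has, by Section~3, the expansion
\[
\tilde F(u,w)=\bigl(u+f(w)+O(1/u),\ \lambda w-g(w)/u+O(1/u^2)\bigr),
\]
and then to exploit the resulting translation-like behavior of the first coordinate to push the orbit into one of the sets $V_m$ composing $\mathcal V$.

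First I would set $(u_n,w_n):=\Theta^{-1}(F^{\circ n}(z_0,w_0))$, so the hypothesis becomes $|u_n|\to\infty$ and $|w_n|\leq M$ for some $M>0$. Given $\mathcal V=\bigcup_{k,m\geq 0}F^{\circ k}(V_m)$, note that $\mathcal V$ is $F$-forward invariant, since $F(F^{\circ k}(V_m))=F^{\circ(k+1)}(V_m)\subset\mathcal V$. Hence it is enough to produce a single index $n_0\geq 0$ and some $m\geq 0$ with $F^{\circ n_0}(z_0,w_0)\in V_m$. Choosing $m$ large so that $\delta_m/4>M$ makes the $w$-condition $|w_n|<\delta_m/4$ automatic for every $n$, and the remaining task is to force $u_{n_0}\in K_{R_m,\delta_m/4}$.

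The main step is to prove that ${\sf Re}(u_n)\to+\infty$. As soon as ${\sf Re}(u_{n_0})>R_m$ one has both ${\sf Re}(u_{n_0})>-\gamma(\delta_m/4)$ and $\arg(u_{n_0}-R_m)\in(-\pi/2,\pi/2)\subset[-3\pi/4,3\pi/4]$, so $u_{n_0}\in K_{R_m,\delta_m/4}$. Summing the first-coordinate recursion gives
\[
u_n=u_0+n+\sum_{k=0}^{n-1}\bigl(f(w_k)-1\bigr)+\sum_{k=0}^{n-1}O(1/u_k),
\]
in which $\sum O(1/u_k)=o(n)$ by Ces\`aro, since $1/|u_k|\to 0$. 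For the middle term, I would write $w_k=\lambda^k w_0+\epsilon_k$, where $\{\epsilon_k\}$ is bounded because the second-coordinate recursion reads $w_{k+1}=\lambda w_k+O(1/u_k)$ and $|w_k|\leq M$. Expanding $f(w)-1=\sum_{\ell\geq 1}d_\ell w^\ell$, the unperturbed contribution $w_0^\ell\sum_{k<n}\lambda^{k\ell}$ is bounded by $O(\ell^r)$ thanks to Lemma~\ref{lemma:lambda} (and summable over $\ell$ since $f$ is entire), while the perturbative contributions involving $\epsilon_k$ would be shown to be $o(n)$ by an Abel-summation argument in the spirit of Lemma~\ref{lemma:sequence}, coupling the Diophantine bound on partial sums of $\lambda^{k\ell}$ with the decay $1/|u_k|\to 0$. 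This yields ${\sf Re}(u_n)=n+o(n)\to+\infty$, hence an $n_0$ as above exists, and forward invariance of $\mathcal V$ gives $F^{\circ n}(z_0,w_0)\in\mathcal V$ for all $n\geq n_0$.

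For the final assertion, $\mathcal W^\iota\subseteq\Omega$ is immediate from $\mathcal V\subset\Omega$ and the $F$-invariance of $\Omega$. Conversely, let $p\in\Omega$. By Proposition~\ref{prop1} every limit map of $\{F^{\circ n}\}$ on $\Omega$ has image in $\{0\}\times\C$, so no subsequence compactly diverges to $\infty$ on $\Omega$; by normality $\{F^{\circ n}(p)\}$ is therefore locally, hence pointwise, bounded and every accumulation point lies in $\{0\}\times\C$. Thus $\pi_1(F^{\circ n}(p))\to 0$ and $\{\pi_2(F^{\circ n}(p))\}$ is bounded, so the first part produces $n_0$ with $F^{\circ n_0}(p)\in\mathcal V$, whence $p\in(F^{-1})^{\circ n_0}(\mathcal V)\subseteq\mathcal W^\iota$. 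The hard part throughout is the Abel-summation control of $\sum(f(w_k)-1)$: since $\epsilon_k$ need not converge to zero, one must leverage the Diophantine oscillation of $\lambda^{k\ell}$ together with $1/u_k\to 0$ simultaneously to extract the required $o(n)$ cancellation.
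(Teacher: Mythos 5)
Your reduction of the statement to the claim ${\sf Re}(u_n)\to+\infty$ is sound, and your proof of the final assertion $\Omega=\mathcal W^\iota$ from the first part is correct. But the central step --- showing $\sum_{k<n}\bigl(f(w_k)-1\bigr)=o(n)$ --- is exactly the point you leave open, and it is a genuine gap, not a routine Abel summation. Writing $w_k=\lambda^k w_0+\epsilon_k$, the pure rotation part is indeed bounded by Lemma~\ref{lemma:lambda}, and the terms linear in $\epsilon_k$ can be handled by swapping the order of summation provided you control $\sum_{j<n}|c_j|$ where $\epsilon_{k+1}=\lambda\epsilon_k+c_k$ and $c_k=O(1/u_k)$. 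The hypothesis only gives $|u_k|\to\infty$ with no rate, so $\sum_j 1/|u_j|$ may diverge and $\epsilon_k$ is merely bounded (by $2M$), not small and not tending to $0$; consequently the terms of order $\geq 2$ in $\epsilon_k$ in the expansion of $(\lambda^kw_0+\epsilon_k)^\ell$ carry no oscillating factor at all and can a priori contribute a full $O(n)$. Making the drift small requires $\sum_k 1/|u_k|<\infty$, which essentially follows from ${\sf Re}(u_k)\gtrsim k$ --- the very conclusion you are trying to reach --- so the argument as proposed is circular. Note also that there is no monotonicity to fall back on: for $|w|\leq M$ with $M$ large, ${\sf Re}(f(w))$ can be very negative (e.g.\ $f(w)=e^{\lambda w}$ in Section~\ref{last}), so ${\sf Re}(u_{n+1})$ can decrease at individual steps, and only the averaged cancellation you have not established would save the estimate.

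The paper avoids all of this with a soft argument you may want to compare with. Since $F^{-1}$ has the same normal form (after conjugating by $(z,w)\mapsto(-z,w)$), Proposition~\ref{prop1} applied to it produces a second ``petal'' $\mathcal V^-$, backward invariant under $F$, and by Remark~\ref{Vsupset} one may arrange $|z|<|z_0|$ on $\mathcal V^-$. The union $\mathcal V\cup\mathcal V^-\cup(\{0\}\times\C)$ is a neighborhood of $\{0\}\times\C$, so an orbit converging to the axis that avoided $\mathcal V$ forever would eventually enter $\mathcal V^-$; backward invariance of $\mathcal V^-$ then drags the initial point $(z_0,w_0)$ into $\mathcal V^-$, contradicting $|z|<|z_0|$ there. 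This sidesteps any quantitative control of the orbit in the $u$-chart. Your approach could conceivably be repaired by a bootstrap, but as written the key cancellation is asserted rather than proved.
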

\begin{proof}
If $F^{\circ n_0}(z,w)\in \mathcal V$ for some $n_0$, then $F^{\circ n}(z,w)\in \mathcal V$ for all $n\geq n_0$ since $F(\mathcal V)\subset \mathcal V$ by construction.

Therefore, we assume by contradiction that $F^{\circ n}(z,w)\not\in \mathcal V$ for all $n\geq 0$.

As we already notice, $F^{-1}$ has the same form of $F$, that is,
\[
F^{-1}(z,w)=(z+\tilde f(w)z^2+O(z^3), \overline{\lambda}w+\tilde g(w)z+O(z^2)),
\]
where $\tilde f, \tilde g:\C \to \C$ are holomorphic, $\tilde f(0)=-1$ (since we assumed $f(0)=1$) and $g(w)=O(w^2)$. Let $\chi(z,w)=(-z,w)$. The automorphism $\chi \circ F^{-1}\circ \chi$ has the same form as $F^{-1}$, but the coefficient of $z^2$ in the first coordinate is $1$.
Hence, by Proposition~\ref{prop1}, there exists an invariant non-recurrent Fatou component $\Omega^-$ of $\chi\circ F^{-1}\circ \chi$ with $\{0\}\times \C\subset \partial \Omega^-$, and $\Omega^-$ contains a connected open set $\tilde {\mathcal V}^-$ of the same form as \eqref{Vset}. Moreover, by Remark~\ref{Vsupset}, we can assume that $|z|<|z_0|$ for all $(z,w)\in \tilde {\mathcal V}^-$.

In particular, $\chi(\Omega^-)$ is a non-recurrent Fatou component  of $ F^{-1}$ with $\{0\}\times \C$ on the boundary,   contains the open set $\mathcal V^-:=\chi(\tilde {\mathcal V}^-)$, and $|z|<|z_0|$ for all $(z,w)\in  {\mathcal V}^-$, that is, $(z_0,w_0)\not\in \mathcal V^-$.

By the very definition of $\mathcal V$ and $\mathcal V^-$, it follows that $\mathcal V\cup  \mathcal V^-\cup (\{0\}\times \C)$ is a neighborhood of $\{0\}\times \C$. Therefore, since \textcolor{black}{ $\lim_{n\to \infty}\pi_1(F^{\circ n}(z_0,w_0))=0$,  $\{\pi_2(F^{\circ n}(z_0,w_0))\}$ is bounded } and $F^{\circ n}(z_0,w_0)\not\in \mathcal V$, the sequence $\{F^{\circ n}(z,w)\}$ has to be eventually  contained in $ \mathcal V^-$.

However, since  $F^{-1}(\mathcal V^-)\subset \mathcal V^-$, it follows that the entire orbit $\{F^{\circ n}(z,w)\}_{n\in \mathbb Z}$ is contained in $\mathcal V^-$, hence $(z_0,w_0)\in\mathcal V^-$, a contradiction.
\end{proof}

For natural numbers $n+1> j\geq 1$, we let
$$Q_n(u,w)=(u-n-A\log n,\lambda^{-n}w)$$
and let
$$
\varphi_n=Q_n\circ H^n,
$$
where the $H$ is defined in \eqref{Hj}.

For $n>0$ one can easily verify the following equality
\begin{equation}\label{funct1}
\varphi_{n}\circ H=\chi_{n}\circ\varphi_{n+1},
\end{equation}
where $\chi_{n}(u,w)=(u+1 +A\log(1+\frac{1}{n}), \lambda w)$.

\begin{lemma}\label{limit} For every $\delta>0$ there exists $S_\delta>0$ such that  the sequence $\{\varphi_n\}_{n\in \mathbb N}$ converges  uniformly on compacta of $U_{S_\delta,\delta/4}$ to a univalent map $\varphi:U_{S_\delta,\delta/4}\to \C^2$ such that
\begin{equation}\label{funct2}
\varphi\circ H=\chi\circ\varphi,
\end{equation}
where $\chi(u,w)=(u+1, \lambda w)$ and
\begin{equation}\label{made}
\varphi(u,w)=(u-A\log(u)+o(1), w+o(1))
\end{equation}
as ${\sf Re}(u)\rightarrow \infty$. Moreover, given any increasing sequence $\{\delta_m\}$ of positive real numbers converging to $\infty$, $\varphi:\bigcup_{m\geq 0}U_{S_{\delta_m}, \delta_m/4}\to \C^2$ is univalent.
\end{lemma}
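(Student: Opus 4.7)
The plan is to prove convergence of $\{\varphi_n\}$ by showing that the increments $\varphi_{n+1}-\varphi_n$ are summable, then to pass to the limit in \eqref{funct1} for the functional equation, and finally to combine the asymptotic form \eqref{made} with Hurwitz's theorem to obtain univalence.

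First, I would choose $S_\delta\ge T_{\delta/2}$ (with $T_\cdot$ as in Lemma~\ref{induction}) large enough that $H^n$ is univalent on $U_{S_\delta,\delta/4}$ for every $n\ge 1$ and so that the orbit $(u_n,w_n):=H^n(u,w)$ remains in $U_{S_\delta+n/2,\delta/2}$ and satisfies the estimates of Lemma~\ref{estimate2}. Using Proposition~\ref{h} a direct computation yields
\begin{align*}
\pi_1(\varphi_{n+1}-\varphi_n)&=A\bigl(1/u_n-1/n\bigr)+A\bigl(1/n-\log(1+1/n)\bigr)+O(1/u_n^2),\\
\pi_2(\varphi_{n+1}-\varphi_n)&=\lambda^{-(n+1)}O(1/u_n^2).
\end{align*}
By Lemma~\ref{estimate2}, the first expression is $O(\log n/n^2)$ and the second is $O(1/n^2)$; both are summable uniformly on compacta. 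Hence $\{\varphi_n\}$ converges locally uniformly on $U_{S_\delta,\delta/4}$ to a holomorphic map $\varphi$.

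Passing to the limit $n\to\infty$ in \eqref{funct1} and using $\chi_n(u,w)=(u+1+A\log(1+1/n),\lambda w)\to\chi(u,w)$ uniformly on compacta gives \eqref{funct2}. For the asymptotic form \eqref{made}, I would iterate the first-coordinate recursion of Proposition~\ref{h} to obtain
\[
u_n-u-n=A\sum_{k=0}^{n-1}\frac{1}{u_k}+O\!\Bigl(\sum_{k=0}^{n-1}1/u_k^2\Bigr).
\]
Writing $1/u_k=1/(u+k)+(1/u_k-1/(u+k))$ and using ${\sf Re}(u_k)>{\sf Re}(u)+k/2$, the digamma asymptotic gives $\sum_{k=0}^{n-1}1/(u+k)-\log n\to-\log u+O(1/u)$ as $n\to\infty$, while the discrepancy term and the second sum contribute $O(1/{\sf Re}(u))=o(1)$ as ${\sf Re}(u)\to\infty$. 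Subtracting $A\log n$ yields the first coordinate of \eqref{made}. An analogous telescoping of $\lambda^{-(k+1)}w_{k+1}-\lambda^{-k}w_k=O(1/u_k^2)$ produces $\lambda^{-n}w_n-w=O(1/{\sf Re}(u))$, giving the second coordinate.

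Each $\varphi_n$ is univalent on $U_{S_\delta,\delta/4}$ (composition of the univalent $H^n$ with the affine $Q_n$). The asymptotic \eqref{made} implies that $\det d\varphi$ is close to $(1-A/u)\cdot 1$ for large ${\sf Re}(u)$, hence not identically zero; by the several-variable Hurwitz theorem, $\varphi$ is univalent on $U_{S_\delta,\delta/4}$. To extend univalence to $\mathcal U:=\bigcup_m U_{S_{\delta_m},\delta_m/4}$, suppose $\varphi(p_1)=\varphi(p_2)$ with $p_1,p_2\in\mathcal U$; by \eqref{funct2} one has $\varphi(H^n(p_1))=\varphi(H^n(p_2))$, and for $n$ large enough both $H^n(p_j)$ lie in a common $U_{S_\delta,\delta/4}$ (choose $\delta$ so that both initial $|w|$ are $<\delta/4$) where $\varphi$ is univalent; then $H^n(p_1)=H^n(p_2)$, and the injectivity of $H$ forces $p_1=p_2$. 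The hard part will be Step~3: one must extract the $-A\log u$ term precisely from $\sum 1/u_k$ and control the remainder \emph{uniformly} as ${\sf Re}(u)\to\infty$. Lemma~\ref{estimate2} supplies $|1/u_n-1/n|=O(\log n/n^2)$ only for a fixed compactum, so one needs a parallel induction on $v_n:=u_n-u-n$ whose implied constants behave tamely as $u$ tends to infinity.
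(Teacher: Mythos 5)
Your proposal is correct and follows essentially the same route as the paper: telescoping $\varphi_{n+1}-\varphi_n$ via Proposition~\ref{h} and Lemma~\ref{estimate2}, passing to the limit in \eqref{funct1} for the functional equation, and deducing univalence from the locally uniform limit of univalent maps. You are in fact more explicit than the paper on the two points it glosses over --- the extraction of the $-A\log u$ term in \eqref{made} (which the paper dismisses as following ``immediately'', and for which your digamma computation and the flagged uniformity issue are exactly the right concerns) and the univalence on $\bigcup_m U_{S_{\delta_m},\delta_m/4}$, where your dynamical argument pushing both preimages forward via \eqref{funct2} into a common $U_{S_\delta,\delta/4}$ cleanly supplies the justification the paper omits.
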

\begin{proof} Fix $\delta>0$ and let $T_\delta$ be given by Lemma \ref{induction}. Let $(u_0,w_0)\in U_{T_\delta,\delta/2}$ and set $(u_n,w_n):=H^n(u_0,w_0)$. By Lemma \ref{induction} we have $(u_n,w_n)\in U_{T_\delta,\delta}$ for all $n$. Hence, by Proposition \ref{h},
\[
(u_{n+1},w_{n+1})=\left(u_n+1+\frac{A}{u_n}+O\left(\frac{1}{u_n^2}\right),\lambda w_n+O\left(\frac{1}{u_n^2}\right)\right),
\]
where the bounds in the $O$'s are uniform in $n$.
 Let us write
 \[
 (u',w')=\varphi_{n+1}(u_0,w_0)-\varphi_n(u_0,w_0).
 \]
 Observe that
\begin{align*}
w'&=\lambda^{-(n+1)}\left(\lambda w_n+O\left(\frac{1}{u_n^2}\right)\right)-\lambda^{-n}w_n=O\left(\frac{1}{u_n^2}\right)
\end{align*}
and
\begin{align*}
u'&=u_n+1+\frac{A}{u_n}+O\left(\frac{1}{u_n^2}\right)-(n+1)-A\log(n+1)-u_n+n+A\log n\\
&=A\left(\frac{1}{u_n}-\frac{1}{n}\right)+ A\left(\frac{1}{n}-\log(1+\frac{1}{n})\right) +O\left(\frac{1}{u_n^2}\right).
\end{align*}
By Lemma \ref{estimate2}  we have
$$\left|\frac{1}{u_n}-\frac{1}{n}\right|=O\left(\frac{\log n}{n^2}\right)$$
and
$$O\left(\frac{1}{|u_n|^2}\right)=O\left(\frac{1}{n^2}\right).$$
Next observe that $\left(\frac{1}{n}-\log(1+\frac{1}{n})\right)=O(\frac{1}{n^2})$. Since all bounds are uniform on compact subsets of $U_{T,\delta/2}$ and independent from $n$
it follows that $\sum_{n=j}^{\infty}(\varphi_{n+1}-\varphi_{n})$ converges absolutely, hence the sequence $\{\varphi_n\}$ converges uniformly on compacta of $U_{T_\delta,\delta/2}$ to a map $\varphi$.

Fix $\epsilon>0$. By Lemma \ref{induction}, $(u_n,w_n)\in U_{T_{\frac{\delta}{2}}+\frac{n}{2}, \delta/2}$ for every $(u_0,w_0)\in U_{T_{\delta/2}, \delta/4}$. Therefore, by the same lemma, there exists $n_0$ such that for all $n\geq n_0$ and all $(u,w)\in U_{T_{\delta/2}, \delta/4}$,
\[
|\varphi(u,w)-\varphi_n(u,w)|<\epsilon.
\]
Since all maps $\varphi^n$ are univalent it follows that $\varphi$ is also univalent on $U_{T_{\delta/2}, \delta/4}$. Setting $S_\delta:=T_{\delta/2}$ we have the first result.

Since $\varphi_{n}(u,w)=\sum_{k=0}^{n-1}(\varphi_{k+1}(u,w)-\varphi_k(u,w))$, \eqref{made} follows immediately from the previous computations. The functional equation \eqref{funct2} follows from \eqref{funct1} passing to the limit.
Finally observe that given any increasing sequence $\{\delta_m\}$ of positive real numbers converging to $\infty$ there exist a sequence of positive real numbers $\{S_{\delta_m} \}$ so that $\varphi$ is univalent on $\bigcup_{m\geq 0}U_{S_{\delta_m}, \delta_m/4}$.
\end{proof}

Let $\{\delta_m\}$ be an increasing sequence of positive real numbers converging to $\infty$ and let $\{S_{\delta_m}\}$ be the sequence given by Lemma \ref{limit}. Let $\{R_m\}$ be an increasing sequence of positive real numbers such that $R_m\geq \max\{R(\delta_m), S_{\delta_m}\}$ (where, as before, the $R(\delta_m)$'s are defined in Step 1 of the proof of Proposition \ref{prop1}). Let $V_m:=V_{R_m, \delta_m/4}$.

We define
\[
P=(\Theta\circ\Phi\circ\Psi\circ\tau\circ \varphi^{-1})^{-1}.
\]
By Lemma \ref{limit}, $P$ is a  univalent map defined on  $\bigcup_{m=1}^{\infty}V_{m}$.

By \eqref{EqFn} and \eqref{funct2},
\begin{equation}\label{funct3}
F=P^{-1} \circ
\chi\circ P
\end{equation}
on $\bigcup_{m=1}^{\infty}V_{m}$ for all $n\geq 0$.

\begin{proposition}\label{prop2} The Fatou component $\Omega$ is biholomorphic to $\C^2$ and there exists a  univalent map $\mathcal{Q}$ defined on $\Omega$ such that
\begin{equation}\label{funct4}
\mathcal{Q}\circ F= \chi\circ \mathcal{Q}
\end{equation}
where $\chi(u,w)=(u+1,\lambda w)$.
\end{proposition}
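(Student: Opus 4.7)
The plan is to extend the locally defined conjugacy $P$, which by \eqref{funct3} intertwines $F$ and $\chi$ on $\bigcup_{m}V_{m}$, to a globally defined biholomorphism $\mathcal{Q}\colon\Omega\to\C^{2}$ by pulling back along orbits of $F$. For $p\in\Omega$ I would first show that there exists $k\in\mathbb{Z}$ with $F^{\circ k}(p)\in V_{m}$ for some $m$: indeed, by Step~3 of Proposition~\ref{prop1} together with normality of $\{F^{\circ n}\}$ on $\Omega$, the orbit of $p$ satisfies $\pi_{1}\circ F^{\circ n}(p)\to 0$ and has bounded $\pi_{2}$-component, so Lemma~\ref{fatou} applies and $F^{\circ n}(p)\in\mathcal{V}=\bigcup_{j\ge 0}F^{\circ j}(V)$ for all $n\ge n_{0}$; unrolling this inclusion produces the desired $k$. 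With such $k$ in hand, I set
\[
\mathcal{Q}(p):=\chi^{-k}\bigl(P(F^{\circ k}(p))\bigr).
\]

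The main obstacle is to verify that $\mathcal{Q}$ is independent of the choice of $k$. If $k_{1}<k_{2}$ both qualify, one needs $P(F^{\circ(k_{2}-k_{1})}(q))=\chi^{k_{2}-k_{1}}(P(q))$ for $q:=F^{\circ k_{1}}(p)\in V_{m}$, which would follow from iterating the local identity $P\circ F=\chi\circ P$ of \eqref{funct3}. This iteration is legitimate only when all intermediate points $F^{\circ j}(q)$, $0\le j\le k_{2}-k_{1}$, lie inside the domain of $P$, namely $\bigcup_{m}V_{m}$. The uniform estimates of Section~\ref{prelim} guarantee this: combining Lemma~\ref{induction} with the coordinate-change behaviour in Lemmas~\ref{Phi}, \ref{psi} and \ref{tau}, one sees that once an orbit enters $V_{m}$ it remains in $V_{m'}$ for all subsequent forward times, with $m'\ge m$ depending only on $m$. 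Granted this, $\mathcal{Q}$ is well defined and locally holomorphic (inherited from $P$ and $F^{\circ k}$), and the functional equation $\mathcal{Q}\circ F=\chi\circ\mathcal{Q}$ is immediate on substituting $k-1$ for $k$ in the defining formula.

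Univalence follows from the same mechanism: if $\mathcal{Q}(p_{1})=\mathcal{Q}(p_{2})$, take $k$ so large that both $F^{\circ k}(p_{i})$ lie in a common $V_{m'}$; then
\[
P(F^{\circ k}(p_{1}))=\chi^{k}(\mathcal{Q}(p_{1}))=\chi^{k}(\mathcal{Q}(p_{2}))=P(F^{\circ k}(p_{2})),
\]
and univalence of $P$ together with the injectivity of $F^{\circ k}$ forces $p_{1}=p_{2}$.

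For surjectivity $\mathcal{Q}(\Omega)=\C^{2}$, observe that $\mathcal{Q}(\Omega)$ is open, connected and $\chi$-invariant. Using the factorisation $P=\varphi\circ\tau^{-1}\circ\Psi^{-1}\circ\Phi^{-1}\circ\Theta^{-1}$ and the asymptotic expansions in Lemmas~\ref{Phi}, \ref{psi}, \ref{tau} and \ref{limit}, the image $P(V_{m})=\mathcal{Q}(V_{m})$ contains a set of the shape $\{(u,w)\in\C^{2}:\operatorname{Re}(u)>C_{m}\}\times\D_{\eta_{m}}$, with $\eta_{m}\to\infty$ as $m\to\infty$. Since $\chi^{-1}(u,w)=(u-1,\lambda^{-1}w)$ shifts $\operatorname{Re}(u)$ by $-1$ while preserving $|w|$, iterating yields $\bigcup_{n\ge 0}\chi^{-n}(P(V_{m}))\supset\C\times\D_{\eta_{m}}$, and taking $m\to\infty$ gives $\mathcal{Q}(\Omega)\supset\bigcup_{m}\C\times\D_{\eta_{m}}=\C^{2}$. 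Combining univalence with surjectivity, $\mathcal{Q}$ is a biholomorphism $\Omega\to\C^{2}$, completing the proof.
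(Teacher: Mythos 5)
Your proposal is correct and takes essentially the same approach as the paper: extend the local conjugacy $P$ along orbits via $\mathcal{Q}=\chi^{-k}\circ P\circ F^{\circ k}$, check well-definedness and univalence from \eqref{funct3}, and obtain surjectivity by exhausting $\C^2$ with backward $\chi$-translates of $P(\mathcal V)$. One minor imprecision: since $\varphi(u,w)=(u-A\log u+o(1),\,w+o(1))$, the image $P(V_m)$ need not contain a full half-plane times a disk (the $-A\log u$ term can push the boundary's real part to $+\infty$ along the imaginary direction), and the paper only extracts a sector $\{{\sf Re}(u-r_k)>|{\sf Im}(u)|\}\times\D_{k/2}$ --- but the union of the $\chi^{-n}$-translates of such a sector still covers $\C\times\D_{k/2}$, so your conclusion is unaffected.
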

\begin{proof}
Let $V_m$ as before. Let $\mathcal V:=\cup_{n\geq 0}\cup_{m\geq 0}F^{\circ n}(V_m)$. By Lemma \ref{fatou}, $\Omega=\cup_{k=0}^{\infty} F^{-k}(\mathcal V)$.

We extend $P$ to a univalent map $\mathcal{Q}$ defined on $\Omega$ as follows. If $(z,w)\in \Omega$, there exists a natural number $n$ such that $F^{\circ n}(z,w)\in V_m$ for some $m$. Hence, we set
$$
\mathcal{Q}(z,w)=(\chi^{-1})^{\circ n}\circ P\circ F^{\circ n}(z,w).
$$
By \eqref{funct3}, this definition is well posed and $\mathcal Q:\Omega\to \C^2$ is univalent. The functional equation \eqref{funct4} therefore follows from \eqref{funct3}.

Now we prove that $\mathcal{Q}(\Omega)=\C^2$. Let $\Omega_n:=\cup_{k=0}^n (F^{-1})^{\circ n}(\mathcal V)$. Observe that
$$\mathcal Q(\Omega)=\cup_{n=0}^{\infty} (\chi^{-1})^{\circ n}\circ P\circ F^{\circ n}(\Omega_n)= \cup_{n=0}^{\infty} (\chi^{-1})^{\circ n}\circ P(\mathcal{V}).$$

From the definition of the maps $\Theta$, $\Phi$, $\Psi$ and $\tau$  and the set $\mathcal{V}$  we can find a sequence of $\rho_n\rightarrow\infty$ satisfying
$$\cup_{k=0}^{\infty}\{u\in \C:\text{Re}(u)>\rho_k\}\times\mathbb{D}_k\subset\tau^{-1}\circ\Psi^{-1}\circ\Phi^{-1}\circ\Theta^{-1}(\mathcal{V}).$$

It follows
$$\varphi\left(\cup_{k=0}^{\infty}\{u\in \C:\text{Re}(u)>\rho_k\}\times\mathbb{D}_k\right) \subset P(\mathcal{V}).$$

Therefore
$$\cup_{n=0}^{\infty} \cup_{k=0}^{\infty}(\chi^{-1})^{\circ n}\circ\varphi\left(\{u\in \C:\text{Re}(u)>\rho_k\}\times\mathbb{D}_k\right)\subseteq\mathcal Q(\Omega).$$

Equation \eqref{made}  shows that for every $k$ we can find $r_k\geq \rho_k$ such that
\[
\{u\in \C:\text{Re}(u-r_k)>|{\sf Im}(u)|\}\times\mathbb{D}_{k/2} \subseteq \varphi\left(\{u\in \C:\text{Re}(u)>\rho_k\}\times\mathbb{D}_k\right),
\]
hence
\[
\C^2=\cup_{n=0}^{\infty}\cup_{k=0}^{\infty} (\chi^{-1})^{\circ n}\left(\{u\in \C:\text{Re}(u-r_k)>|{\sf Im}(u)|\}\times\mathbb{D}_{k/2}\right)\subseteq \mathcal{Q}(\Omega),
\]
and we are done.
\end{proof}

Theorem \ref{main} now follows from Proposition \ref{prop1} and Proposition \ref{prop2}.

\section{An example}\label{last}

Using shears and overshears we can construct an explicit automorphism of the form (\ref{form}).  We first define automorphisms
\begin{align*}
F_1(z,w)&=(z,\lambda w+z),\\
F_2(z,w)&=(ze^w,w),\\
F_3(z,w)&=(z,w-z),\\
F_4(z,w)&=(ze^{-w},w),\\
F_5(z,w)&=(z,we^{z}),
\end{align*}
and finally
$$
F(z,w):=(F_5\circ F_4\circ F_3\circ F_2\circ F_1)(z,w).
$$
Quick computation shows that
$$
F(z,w)=\left(z+e^{\lambda w}z^2+O(z^3), \lambda w-z\sum_{k=2}^{\infty}\frac{\lambda^k}{k!}w^k +O(z^2) \right)
$$
and
$$
F^{-1}(z,w)=\left(z-e^{ w}z^2+O(z^3), \lambda^{-1} w+\lambda^{-1}z\sum_{k=2}^{\infty}\frac{1}{k!}w^k +O(z^2) \right).
$$
\bibliographystyle{amsplain}

\end{document}